\documentclass{article}
\headsep 0.5cm \pagestyle{myheadings}
\usepackage{amssymb,amsmath,latexsym, amsthm,enumerate,  amsfonts}
\usepackage{wrapfig, tikz}
\usetikzlibrary{matrix, calc}
\usepackage{blindtext}
\title{} \author{} \date{}
\markboth{A. Njamcul, A. Pavlovi\'c}{On preserving continuity in ideal topological spaces}\setcounter{page}{1}
\usepackage[pdftex, pdfstartview=FitH]{hyperref} 

\usepackage[paperwidth=210mm, paperheight=297mm, twoside, hmargin={25mm,20mm}, vmargin={20mm,20mm} ]{geometry}


\numberwithin{equation}{section} 

\newcommand{\Cl}{\mathop{\mathrm {Cl}}\nolimits}
\newcommand{\Int}{\mathop{\mathrm {Int}}\nolimits}
\newcommand{\id}{\mathop{\mathrm {id}}\nolimits}

\setcounter{totalnumber}{5}

 \newtheorem{thm}{Theorem}
 \newtheorem{cor}{Corollary}
 
 \newtheorem{prop}{Proposition}
 \theoremstyle{definition}
 
  \newtheorem{ex}{Example}
 \theoremstyle{remark}
 \newtheorem{rem}{Remark}


\begin{document}
\thispagestyle{empty}
\begin{center}
{\large \bf  On preserving continuity in ideal topological spaces\footnote{The authors acknowledge financial support of the Ministry of Education, Science and Technological Development of the Republic of Serbia (Grant No. 451-03-9/2021-14/ 200125)
}

} \vspace*{3mm}

{\bf Anika Njamcul\footnote{Department of Mathematics and Informatics, Faculty of Sciences, University of Novi Sad, Serbia, e-mail: \href{mailto:anika.njamcul@dmi.uns.ac.rs}{anika.njamcul@dmi.uns.ac.rs}}}
and
{\bf Aleksandar Pavlovi\'c\footnote{Department of Mathematics and Informatics, Faculty of Sciences, University of @Novi Sad, Serbia, e-mail: \href{mailto:apavlovic@dmi.uns.ac.rs}{apavlovic@dmi.uns.ac.rs}}}
\end{center}

\begin{abstract}

We present some sufficient conditions for continuity of the mapping $f:\langle X,\tau_X^*\rangle \to \langle Y,\tau_Y^*\rangle$, where $\tau_X^*$ and $\tau_Y^*$ are topologies induced by the local function on $X$ and $Y$, resp. under the assumption that the mapping from  $\langle X, \tau_X \rangle$ to $\langle Y, \tau_Y \rangle$ is continuous. Further, we consider open and closed functions in this matter, as we state the cases in which the open (or closed) mapping is being preserved through the "idealisation" of both domain and codomain.
Through several examples we illustrate that the  conditions we considered  can not be weakened.
\\[2mm] {\it AMS Mathematics  Subject Classification $(2010)$}:
54A10, 
54A05,  
54B99, 
54E99 
\\[1mm] {\it Key words and phrases:} ideal topological space, local function, continuous function, open mapping, closed mapping

\end{abstract}

\section{Introduction}

We can say that ideals and continuity belong to the  folklore in general topology, and, therefore,  in mathematics. First steps in introducing topological spaces enhanced by an ideal is due to Kuratowski \cite{KURold, KUR}  in 1933, who introduced local function as a generalization to closure. A little bit later ideals in topological spaces were studied by Vaidyanathaswamy \cite{Vaid}.  Freud \cite{Freud} generalized the Cantor-Bendixson theorem using ideal topological space.   Scheinberg \cite{Sein} applied ideals in the measure theory. Finally,  in 1990  Jankovi\'c and Hamlett \cite{JH} wrote a survey paper  on the topic of ideal topological spaces in which they gathered their recent results together with previous results in the area. Today this paper is a starting point, and  a pattern for introducing many variations and generalizations of open sets.

Continuity belongs to the origins of the general topology. Similar thing is with open and closed mappings. It is hard to imagine any topological course without the notion of continuous, open and closed function. Therefore, telling the story on history of such mappings we will leave to Engelking \cite{eng}.

The aim of this paper is to investigate preserving continuity, open, and closed mappings after changing original topology with the ideal applying the local function. Since such topology is always finer than the original, changing applying ideal only on domain topology will preserve continuity, and if we apply ideal on codomain topology, open (closed) mappings will remain open (closed). So, we investigate situations when both, domain and codomain, topologies are changed by the ideal and its local function.

\section{Definitions and notations}

We will use the following notation. If  $\langle X, \tau\rangle$ is  a topological space,  $\tau(x)$ will be the family of open neighbourhoods at the point $x$, $\Cl_{\tau}(A)$ or $\overline{A}^{\tau}$  the closure of the set $A$, and $\Int_{\tau}(A)$ its interior. If there is no chance of misunderstanding about the topology dealt with, we will use only $\Cl(A)$, $\overline{A}$ and $\Int(A)$.  If not stated otherwise, no separation axioms will be presumed, but all examples will be  in spaces which are at least $T_0$.

If $f:X \to Y$, for $A \subseteq X$ and $B \subseteq Y$, direct image of the set $A$ is defined by $f[A]=\{f(x):x \in A\}$ and preimage of $B$ is defined  by $f^{-1}[B]=\{x \in X :f(x) \in B \}$.\\

Just for the completeness of this paper we state several equivalent conditions for continuity of a function $f:X \to Y$ which will be used in the paper.

\begin{prop}\label{continuityeq}
\cite[Proposition 1.4.1]{eng} For  $f:\langle X, \tau_X\rangle \to \langle Y, \tau_Y\rangle$ the following conditions are equivalent

a) $f$ is continuous.

b) For each $O\in  \tau_Y$ we have $f^{-1}[O]\in \tau_X$.

c) For each $A\subseteq X$ we have $f[\overline{A}]\subseteq \overline{f[A]}$.

d) For each $B\subseteq Y$ we have $\overline{f^{-1}[B]}\subseteq {f^{-1}[\overline{B}]}$.

e) For each $B\subseteq Y$ we have ${f^{-1}[\Int(B)]}\subseteq \Int({f^{-1}[{B}]})$.

\end{prop}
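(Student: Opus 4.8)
The plan is to prove Proposition~\ref{continuityeq} by establishing a cycle of implications among the five conditions, which is the cleanest way to get all pairwise equivalences at once. A natural order is $a) \Rightarrow b) \Rightarrow c) \Rightarrow d) \Rightarrow e) \Rightarrow a)$, though I will first note that $a) \Leftrightarrow b)$ is essentially the definition of continuity (preimages of open sets are open), so the real content lies in linking the closure and interior formulations to the open-set formulation.

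First I would handle $b) \Rightarrow c)$. Given $A \subseteq X$, I want $f[\overline{A}] \subseteq \overline{f[A]}$. The idea is to take the open set $O = Y \setminus \overline{f[A]}$; by $b)$ its preimage $f^{-1}[O]$ is open in $X$. Since $f[A]$ is disjoint from $O$, the set $A$ is disjoint from $f^{-1}[O]$, so $f^{-1}[O]$ is an open set missing $A$, hence missing $\overline{A}$. This forces $f[\overline{A}] \cap O = \emptyset$, i.e. $f[\overline{A}] \subseteq Y \setminus O = \overline{f[A]}$. Next, for $c) \Rightarrow d)$, I would apply $c)$ to the particular set $A = f^{-1}[B]$: then $f[\overline{f^{-1}[B]}] \subseteq \overline{f[f^{-1}[B]]} \subseteq \overline{B}$, using $f[f^{-1}[B]] \subseteq B$ and monotonicity of closure. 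Applying $f^{-1}$ to both sides and using $\overline{f^{-1}[B]} \subseteq f^{-1}[f[\overline{f^{-1}[B]}]]$ gives $\overline{f^{-1}[B]} \subseteq f^{-1}[\overline{B}]$, which is $d)$.

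For $d) \Rightarrow e)$ the clean route is to pass to complements, since interior and closure are dual: $\Int(B) = Y \setminus \overline{Y \setminus B}$ and likewise in $X$. Applying $d)$ to the complement $Y \setminus B$ and using the set-theoretic identities $f^{-1}[Y \setminus B] = X \setminus f^{-1}[B]$ will convert the closure inequality into the desired interior inequality $f^{-1}[\Int(B)] \subseteq \Int(f^{-1}[B])$ after taking complements and being careful about the direction of the inclusion reversing. Finally, for $e) \Rightarrow a)$, given an open $O \subseteq Y$ we have $\Int(O) = O$, so $e)$ yields $f^{-1}[O] = f^{-1}[\Int(O)] \subseteq \Int(f^{-1}[O])$, which combined with the trivial $\Int(f^{-1}[O]) \subseteq f^{-1}[O]$ shows $f^{-1}[O]$ equals its own interior and is therefore open; this returns us to condition $b)$, and hence continuity.

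The steps are individually routine, but the point demanding the most care is the complementation argument in $d) \Rightarrow e)$: one must track precisely how $f^{-1}$ interacts with complements and how the inclusion flips when passing from closures to interiors, since a sign error there silently reverses an inclusion. The other delicate spot is the use of the containments $f[f^{-1}[B]] \subseteq B$ and $A \subseteq f^{-1}[f[A]]$, which hold for arbitrary (not necessarily injective or surjective) $f$ and must be invoked in the correct direction throughout. Since this is a standard result quoted from Engelking, I would keep the proof terse and lean entirely on these elementary image/preimage identities together with the monotonicity and idempotence of the closure operator.
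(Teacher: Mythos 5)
Your proof is correct, but note that the paper itself offers no proof of this proposition at all: it is quoted verbatim from Engelking (Proposition 1.4.1) "just for the completeness of this paper," so there is no internal argument to compare against. Your cyclic scheme $a) \Rightarrow b) \Rightarrow c) \Rightarrow d) \Rightarrow e) \Rightarrow a)$ is the standard textbook route and every step checks out: the passage $b) \Rightarrow c)$ via the open set $O = Y \setminus \overline{f[A]}$ (an open set disjoint from $A$ is disjoint from $\overline{A}$), the substitution $A = f^{-1}[B]$ for $c) \Rightarrow d)$ using $f[f^{-1}[B]] \subseteq B$ and $C \subseteq f^{-1}[f[C]]$, the complementation argument for $d) \Rightarrow e)$ using $f^{-1}[Y \setminus B] = X \setminus f^{-1}[B]$ and $\Int(B) = Y \setminus \overline{Y \setminus B}$, and the closing step $e) \Rightarrow b)$ for open $O$ with $\Int(O) = O$. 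The only point worth flagging is your remark that $a) \Leftrightarrow b)$ is "essentially the definition": this is harmless here since Engelking indeed defines continuity by condition $b)$, but if continuity were instead defined pointwise via neighbourhoods, that equivalence would itself need a (short) argument rather than being definitional. In effect, your proposal supplies the elementary proof that the paper deliberately outsources to its reference.
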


We recall that $f:\langle X, \tau_X\rangle \to \langle Y, \tau_Y\rangle$ is an open (closed) mapping iff for each open (closed) $A  \subset X$ the set $f[A]$ is open (closed). Also, $f$ is a homeomorphism iff it is an open (closed) continuous bijection.


If $X$ is a nonempty set, a family $\mathcal{I}\subset P(X)$ satisfying

{\leftskip 5mm (I0) $\emptyset \in \mathcal{I}$,

(I1) If $A \in  \mathcal{I}$  and $B \subseteq A$, then $B \in \mathcal{I}$,

(I2) If $A, B \in \mathcal{I}$, then $A \cup B \in \mathcal{I}$,

}

\noindent is called an \textbf{ideal} on $X$. If $X \not \in \mathcal{I}$ (i.e.,\ $P(X) \neq \mathcal{I}$), then $\mathcal{I}$ is \textbf{proper}. If there exists $A \subseteq X$ such that $B \in \mathcal{I}$ if $B \subseteq A$, then $\mathcal{I}$ is a \textbf{principal ideal}. The ideal of all finite subsets of a set $X$ will be denoted by $Fin(X)$.\\


If $\langle X, \tau\rangle$ is a topological space and $\mathcal{I}$ an ideal on $X$, then a triple $\langle X, \tau, \mathcal{I}\rangle$ is called an \textbf{ideal topological space}.

If $\langle X, \tau, \mathcal{I}\rangle$ is an ideal topological space, then the mapping $A \mapsto A^*_{(\tau, \mathcal{I})}$ (briefly $A^*$) defined by
$$A^*_{(\tau, \mathcal{I})}=\{x \in X: A \cap U \not \in \mathcal{I} \mbox { for each }  U \in \tau(x)\}$$
is called the \textbf{local function} (see \cite{KUR}).

The local function has the following properties (see \cite{JH}):

{\leftskip 5mm (1) $A \subseteq B \Rightarrow A^* \subseteq B^*$;

(2) $A^*=\Cl(A^*)\subseteq \Cl(A)$;

(3) $(A^*)^*\subseteq A^*$;

(4) $(A\cup B)^*=A^* \cup B^*$

(5) If $I \in \mathcal{I}$, then $(A\cup I)^*=A^*=(A\setminus I)^*$.

}

Also by $\Cl^*(A)=A \cup A^*$   a closure operator on $P(X)$ is defined and it generates a topology $\tau^*(\mathcal{I})$  (briefly $\tau^*$) on $X$ where $$\tau^*(\mathcal{I})=\{U \subseteq X: \Cl^*(X \setminus U)=X \setminus U\}.$$
So, set $A$  is closed in $\tau^*$ iff $A^* \subseteq A$. Using the local function, another operator $\Psi$ is defined by
$$\Psi(A)=X \setminus (X \setminus A)^*.$$
A set $A$ is in $\tau^*$ iff $A\subseteq \Psi(A) $. Also  $\Psi(A)$ is always open in $\tau$.

It is easy to see that $\tau \subseteq \tau^*\subseteq P(X)$.

For more details on the local function, $\Psi$ operator and topology $\tau^*$ we refer the reader to the papers of Jankovi\'c and Hamlett \cite{JH} and Hamlett and Rose \cite{HRStar}.

\section{Previous results}

One of the first results in preserving continuity is due to Samuels.

\begin{thm}[\cite{Sam}]
If $X=X^*$ and $Y$ is regular then $f:\langle X, \tau\rangle \to Y$ is continuous iff $f:\langle X, \tau^*\rangle \to Y$ is continuous.
\end{thm}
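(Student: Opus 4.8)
The plan is to dispose of the easy implication first and then concentrate on the converse, which is where both hypotheses are actually used. Since $\tau \subseteq \tau^*$ always holds, every $\tau$-open set is $\tau^*$-open; hence if $f \colon \langle X,\tau\rangle \to Y$ is continuous then for each open $O \subseteq Y$ the set $f^{-1}[O]$ lies in $\tau \subseteq \tau^*$, so $f \colon \langle X,\tau^*\rangle \to Y$ is continuous by Proposition~\ref{continuityeq}(b). This direction uses neither $X=X^*$ nor regularity of $Y$.

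For the converse I would first record two consequences of $X=X^*$. Unwinding the definition of the local function, $X=X^*$ says precisely that $U \notin \mathcal{I}$ for every nonempty $U \in \tau$; that is, $\mathcal{I}$ contains no nonempty open set. From this I would deduce that for every open $U$ one has $U^* = \Cl_\tau(U)$: the inclusion $U^* \subseteq \Cl_\tau(U)$ is property~(2), while if $x \in \Cl_\tau(U)$ then each $W \in \tau(x)$ meets $U$ in the nonempty open set $W \cap U$, which by $X=X^*$ is not in $\mathcal{I}$, forcing $U \cap W \notin \mathcal{I}$ for all $W \in \tau(x)$, i.e.\ $x \in U^*$.

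Now fix $x \in X$ and an open $V \subseteq Y$ with $f(x) \in V$; I must produce a $\tau$-open neighbourhood of $x$ whose image lies in $V$. Using regularity of $Y$, choose an open $V'$ with $f(x) \in V' \subseteq \Cl(V') \subseteq V$. Since $f \colon \langle X,\tau^*\rangle \to Y$ is continuous, $G := f^{-1}[V']$ is $\tau^*$-open and contains $x$; as $X \setminus G$ is then $\tau^*$-closed we have $(X\setminus G)^* \subseteq X \setminus G$, so $x \notin (X\setminus G)^*$ and the definition of the local function supplies $U \in \tau(x)$ with $I := U \cap (X \setminus G) \in \mathcal{I}$. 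Then $U \setminus I = U \cap G \subseteq G$, giving $x \in U \setminus I$ and $f[U \setminus I] \subseteq V'$. The crux is to compute the $\tau^*$-closure of $U \setminus I$: by property~(5) we have $(U \setminus I)^* = U^*$, and by the previous paragraph $U^* = \Cl_\tau(U)$, so $\Cl^*(U \setminus I) = (U \setminus I) \cup U^* = \Cl_\tau(U)$. Applying the $\tau^*$-continuity of $f$ through Proposition~\ref{continuityeq}(c) to the set $U \setminus I$ then yields
$$f\bigl[\Cl_\tau(U)\bigr] = f\bigl[\Cl^*(U \setminus I)\bigr] \subseteq \Cl\bigl(f[U \setminus I]\bigr) \subseteq \Cl(V') \subseteq V,$$
and since $U \subseteq \Cl_\tau(U)$ this gives $f[U] \subseteq V$. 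Thus $U$ is the required $\tau$-open neighbourhood, and $f$ is $\tau$-continuous at $x$; as $x$ was arbitrary, $f \colon \langle X,\tau\rangle \to Y$ is continuous.

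The one genuinely delicate point is that passing to the finer topology $\tau^*$ shrinks closures, so the containment coming from $\tau^*$-continuity points the wrong way for a naive argument. The hypothesis $X=X^*$ is exactly what rescues this: it forces the $\tau^*$-closure of the thin set $U \setminus I$ to swell back up to the full $\tau$-closure $\Cl_\tau(U)$, which is large enough to contain the $\tau$-neighbourhood $U$. Regularity is then used only to absorb $\Cl(V')$ back inside $V$. I expect verifying $U^* = \Cl_\tau(U)$ and the closure identity $\Cl^*(U\setminus I)=\Cl_\tau(U)$ to be the substantive steps, with everything else routine.
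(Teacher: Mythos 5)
Your proof is correct. Note that the paper itself offers no proof of this statement: it appears in the ``Previous results'' section as a quotation of Samuels' theorem from \cite{Sam}, so there is no in-paper argument to compare yours against. Your argument is the natural (and essentially the classical) one, and every step checks out against the paper's conventions: the unwinding of $X=X^*$ into the statement $\mathcal{I}\cap\tau=\{\emptyset\}$ is exact; the identity $U^*=\Cl_\tau(U)$ for open $U$ follows precisely as you say from property (2) and the fact that $W\cap U$ is nonempty open; the extraction of $U\in\tau(x)$ with $I:=U\cap(X\setminus G)\in\mathcal{I}$ uses the paper's characterization of $\tau^*$-closed sets ($A^*\subseteq A$) together with the definition of the local function; property (5) gives $(U\setminus I)^*=U^*$, whence $\Cl^*(U\setminus I)=(U\setminus I)\cup\Cl_\tau(U)=\Cl_\tau(U)$; and Proposition \ref{continuityeq}(c) applied to the $\tau^*$-continuous $f$ then yields $f[U]\subseteq f[\Cl_\tau(U)]\subseteq\Cl(V')\subseteq V$. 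Your closing commentary also identifies the right mechanism: the hypothesis $X=X^*$ is used exactly to make the $\tau^*$-closure of the shrunken neighbourhood $U\setminus I$ recover the full $\tau$-closure of $U$, and regularity of $Y$ is used only to pass from $\Cl(V')$ back into $V$. The only implicit step is that pointwise continuity (for each $x$ and open $V\ni f(x)$ there is $U\in\tau(x)$ with $f[U]\subseteq V$) implies continuity, which is standard and harmless to leave unremarked.
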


The following result by Natkaniec is a direct consequence of the Samuels's one, but, due to the  completely different technique of the proof and its interpretation in the universe of Polish spaces, it is worth mentioning.

\begin{thm}[\cite{Natkaniec1986}]  Let $f:  X  \to \mathbb{R} $, where $X$ is a Polish space with topology $\tau$, and $\mathcal{I}$  an $\sigma$-complete ideal on $X$ such that $Fin \subset \mathcal{I}$ and $\mathcal{I} \cap \tau=\{\emptyset\}$. If $f:\langle X, \tau^*\rangle \to \langle R, \mathcal{O}_{nat}\rangle$ is a continuous function, then $f:\langle X, \tau\rangle \to \langle R, \mathcal{O}_{nat}\rangle$ is also continuous.
\end{thm}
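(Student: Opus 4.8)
The plan is to obtain this statement as a corollary of Samuels's theorem, which is already at our disposal, rather than to reproduce Natkaniec's original Baire-category argument. To apply Samuels's theorem I must check its two hypotheses: regularity of the codomain and the equality $X = X^*$. The codomain $\langle \mathbb{R}, \mathcal{O}_{nat}\rangle$ is metrizable and hence regular, so that hypothesis comes for free and uses nothing about $X$ or $\mathcal{I}$.

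The only substantive point is to verify $X = X^*$, and I claim this follows from $\mathcal{I} \cap \tau = \{\emptyset\}$ alone. Writing out the definition of the local function for $A = X$ and using $X \cap U = U$, one gets that $X^*$ consists of exactly those $x \in X$ for which $U \notin \mathcal{I}$ for every $U \in \tau(x)$. Now fix $x \in X$ and $U \in \tau(x)$; then $U$ is a nonempty open set, since it contains $x$, so $U \notin \mathcal{I}$ because the only open set lying in $\mathcal{I}$ is $\emptyset$. Hence $x \in X^*$, giving $X \subseteq X^*$; the reverse inclusion is automatic from property (2), namely $X^* \subseteq \Cl(X) = X$. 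Therefore $X = X^*$.

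Having checked both hypotheses, I would invoke Samuels's theorem to conclude that $f:\langle X, \tau\rangle \to \langle \mathbb{R}, \mathcal{O}_{nat}\rangle$ and $f:\langle X, \tau^*\rangle \to \langle \mathbb{R}, \mathcal{O}_{nat}\rangle$ are continuous simultaneously; the assumed $\tau^*$-continuity then yields the desired $\tau$-continuity. I expect no genuine obstacle along this route: the whole of the argument is the observation that $\mathcal{I} \cap \tau = \{\emptyset\}$ already forces $X = X^*$. It is worth emphasizing that the remaining hypotheses — $X$ being Polish, $\mathcal{I}$ being $\sigma$-complete, and $Fin \subset \mathcal{I}$ — are not needed for this derivation; they are vestiges of Natkaniec's independent proof, which (as noted in the text) proceeds by a completely different technique and does not route through the equality $X = X^*$. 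Were one to insist on a direct proof avoiding Samuels, the difficulty would move entirely into approximating $\tau$-neighbourhoods from inside $\tau^*$, which is precisely what those measure/category hypotheses are designed to control; the Samuels route bypasses this.
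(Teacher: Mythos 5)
Your derivation is correct and matches the paper's approach: the paper gives no proof of this statement (it is quoted from Natkaniec), but it explicitly remarks that the result is a direct consequence of Samuels's theorem, and your verification that $\mathcal{I}\cap\tau=\{\emptyset\}$ forces $X=X^*$ (together with regularity of $\mathbb{R}$) is exactly the missing link that makes that remark precise. Your closing observation that the hypotheses of $X$ being Polish, $\sigma$-completeness, and $Fin\subset\mathcal{I}$ play no role in this route is likewise consistent with the paper's comment that Natkaniec's own argument proceeds by a completely different technique.
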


Finally, Hamlett and Jankovi\'c in 1990. brought several results on consequences of continuity in  ideal topological spaces given in terms of the $\Psi$ operator and  the local function. Let us mention that $\langle X,\tau, \mathcal{I}\rangle$ is $\mathcal{I}$-compact \cite{newcomb, rancin} iff for each open cover $\{U_\lambda : \lambda \in \Lambda\}$ exists a finite subcollection $\{U_{\lambda_k}: k \leq n\}$ such that $X \setminus \bigcup \{U_{\lambda_k}: k \leq n\} \in \mathcal{I}$.

\begin{thm}[\cite{JHComp}] Let $f:\langle X, \tau, \mathcal{I}\rangle \to \langle Y, \sigma, f[\mathcal{I}]\rangle$ be a bijection such that $\langle X, \tau\rangle$ is $\mathcal{I}$-compact and $\langle Y, \sigma\rangle$ is Hausdorff. If $f:\langle X, \tau^*\rangle \to \langle Y, \sigma\rangle$ is continuous, then $f:\langle X, \tau^*\rangle \to \langle Y, \sigma^*\rangle$ is a homeomorphism.
\end{thm}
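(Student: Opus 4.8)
The plan is to mirror the classical argument that a continuous bijection from a compact space onto a Hausdorff space is a homeomorphism, replacing ``compact'' by ``$\mathcal{I}$-compact'' and ``closed'' by ``$*$-closed''. Since $f$ is already a bijection (and $f[\mathcal{I}]$ is then genuinely an ideal on $Y$), it suffices to prove two things: that $f\colon\langle X,\tau^*\rangle\to\langle Y,\sigma^*\rangle$ is continuous, and that it is a closed map between these two starred topologies.

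For continuity I would verify condition (c) of Proposition~\ref{continuityeq} for the closure operators $\Cl^*$ generating $\tau^*$ and $\sigma^*$; concretely I would show $f[A^*_{(\tau,\mathcal{I})}]\subseteq (f[A])^*_{(\sigma,f[\mathcal{I}])}$ for every $A\subseteq X$. Fix $x\in A^*_{(\tau,\mathcal{I})}$ and a $\sigma$-neighbourhood $V$ of $f(x)$. Continuity of $f\colon\langle X,\tau^*\rangle\to\langle Y,\sigma\rangle$ makes $f^{-1}[V]$ a $\tau^*$-neighbourhood of $x$, so by the standard basis $\{U\setminus I:U\in\tau,\ I\in\mathcal{I}\}$ of $\tau^*$ there are $U\in\tau(x)$ and $I\in\mathcal{I}$ with $U\setminus I\subseteq f^{-1}[V]$. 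From $x\in A^*$ we get $A\cap U\notin\mathcal{I}$, hence $A\cap(U\setminus I)\notin\mathcal{I}$ by (I1)--(I2); and since $f$ is a bijection, $S\notin\mathcal{I}$ forces $f[S]\notin f[\mathcal{I}]$. As $f[A\cap(U\setminus I)]\subseteq f[A]\cap V$, this yields $f[A]\cap V\notin f[\mathcal{I}]$. Since $V$ was arbitrary, $f(x)\in(f[A])^*$, giving the needed inclusion and with it the continuity of $f\colon\langle X,\tau^*\rangle\to\langle Y,\sigma^*\rangle$.

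For closedness I would factor the argument into three ideal analogues of the classical lemmas, applied to an arbitrary $\tau^*$-closed $C\subseteq X$. (1) First establish that $\mathcal{I}$-compactness of $\langle X,\tau\rangle$ upgrades to $\mathcal{I}$-compactness of $\langle X,\tau^*\rangle$: refine a $\tau^*$-open cover to basic sets $U_\lambda\setminus I_\lambda$, apply $\tau$-$\mathcal{I}$-compactness to $\{U_\lambda\}$, and absorb the finitely many $I_{\lambda_k}$ into the ideal. Then, adding the $\tau^*$-open set $X\setminus C$ to any $\tau^*$-open cover of $C$ shows that $C$ itself is $\mathcal{I}$-compact. (2) Next, using continuity of $f\colon\langle X,\tau^*\rangle\to\langle Y,\sigma\rangle$, pull back a $\sigma$-open cover of $f[C]$ to a $\tau^*$-open cover of $C$ and push the resulting finite mod-$\mathcal{I}$ subcover forward to conclude that $f[C]$ is $f[\mathcal{I}]$-compact. (3) Finally, show that an $f[\mathcal{I}]$-compact subset $K$ of the Hausdorff space $\langle Y,\sigma\rangle$ is $\sigma^*$-closed: for $y\notin K$, separate $y$ from each point of $K$ by disjoint $\sigma$-open sets, cover $K$ by the point-side sets, take a finite mod-$f[\mathcal{I}]$ subcover $\{W_{z_i}\}$, and intersect the corresponding $V_{z_i}$ into a neighbourhood $V$ of $y$ with $K\cap V\subseteq K\setminus\bigcup_i W_{z_i}\in f[\mathcal{I}]$, so $y\notin K^*$. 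Chaining (1)--(3) gives that $f[C]$ is $\sigma^*$-closed, i.e. $f$ is closed; together with the continuity above this makes the bijection $f$ a homeomorphism.

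I expect the main obstacle to be step (1): what we are handed is a $\tau^*$-closed set, not a $\tau$-closed one, so the usual ``closed-in-compact-is-compact'' reasoning cannot be run against $\tau$-open covers directly, since $X\setminus C$ need not be $\tau$-open. The remedy is to transport $\mathcal{I}$-compactness from $\tau$ to $\tau^*$ first and then argue entirely with $\tau^*$-open covers; getting this basis-refinement bookkeeping right, and making sure the hypothesis used in step (2) is continuity $\tau^*\to\sigma$ (rather than $\tau\to\sigma$), is the delicate part. Steps (2) and (3) are routine once these cover/ideal manipulations are set up, and the bijectivity of $f$ is exactly what guarantees both that $f[\mathcal{I}]$ is an ideal and that images of non-$\mathcal{I}$ sets remain outside $f[\mathcal{I}]$.
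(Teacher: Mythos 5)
Your proposal is correct: the continuity step (via the basis $\{U\setminus I: U\in\tau,\ I\in\mathcal{I}\}$ of $\tau^*$ and the fact that a bijection sends sets outside $\mathcal{I}$ to sets outside $f[\mathcal{I}]$) and the three-lemma closedness argument ($\mathcal{I}$-compactness transfers from $\tau$ to $\tau^*$, pushes forward through the $\tau^*$-to-$\sigma$ continuous map, and $f[\mathcal{I}]$-compact subsets of a Hausdorff space are $\sigma^*$-closed) all hold up. Note that the paper itself gives no proof of this statement --- it is quoted in the ``Previous results'' section from Hamlett and Jankovi\'c \cite{JHComp} --- so there is nothing internal to compare against; your reconstruction follows the classical compact-to-Hausdorff route that the cited source itself uses, and correctly identifies the one genuinely delicate point, namely that the closed set one starts from is only $\tau^*$-closed, which forces the transfer of $\mathcal{I}$-compactness to $\tau^*$ before the covering argument can run.
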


We will say that topology $\tau$ and ideal $\mathcal{I}$ are compatible, denoted by $\mathcal{I} \sim \tau$, (see \cite{NJAST}) iff the following statement holds: If for each $A \subset X$ such that for each $x \in A$ there exists $U \in \tau(x)$  such that $U \cap A \in \mathcal{I} $ then $A \in \mathcal{I}$. On the other words, if $A$ is small locally, than it is small globally.

 Also, we will denote  $\Psi(\tau)=\{\Psi(U): U \in \tau\}$ and $\langle \Psi(\tau) \rangle $ will be  the smallest topology containing $\Psi(\tau)$.\\

Some relevant previous results concerning continuous functions are given here.

\begin{thm} [\cite{HRStar}] Let $\langle X, \tau, \mathcal{I}\rangle$ and $ \langle Y, \sigma, \mathcal{J} \rangle$  be ideal topological spaces. Let $f:\langle X, \tau\rangle \to \langle Y, \langle \Psi(\sigma)\rangle\rangle$ be a continuous injection, $\mathcal{J} \sim \sigma$ and $f^{-1}[\mathcal{J}]\subset \mathcal{I}$. Then $\Psi(f[A]) \subseteq f[\Psi(A)]$, for each $A \subseteq X$.
\end{thm}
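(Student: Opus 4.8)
The plan is to prove the inclusion pointwise, unwinding both $\Psi$ operators through the local function and then transporting a witnessing neighbourhood from $Y$ back to $X$ along $f$. Throughout, $*$, $\Psi$ and neighbourhoods carry the data $(\tau,\mathcal{I})$ on $X$ and $(\sigma,\mathcal{J})$ on $Y$. Fix $A\subseteq X$ and take $y\in\Psi(f[A])=Y\setminus(Y\setminus f[A])^*$. By the definition of the local function, $y\notin(Y\setminus f[A])^*$ means there is some $V\in\sigma(y)$ with $V\cap(Y\setminus f[A])=V\setminus f[A]\in\mathcal{J}$. The goal is then to produce a point $x_0$ with $f(x_0)=y$ and $x_0\in\Psi(A)=X\setminus(X\setminus A)^*$, i.e.\ to exhibit some $U\in\tau(x_0)$ with $U\cap(X\setminus A)=U\setminus A\in\mathcal{I}$.

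First I would pin down the role of the unusual codomain topology. Since $\Psi(V)\in\Psi(\sigma)\subseteq\langle\Psi(\sigma)\rangle$, continuity of $f:\langle X,\tau\rangle\to\langle Y,\langle\Psi(\sigma)\rangle\rangle$ delivers exactly that $U:=f^{-1}[\Psi(V)]\in\tau$; this is all the continuity hypothesis is there to supply, which is why the codomain is $\langle\Psi(\sigma)\rangle$ rather than $\sigma$. Because $V$ is $\sigma$-open we have $V\subseteq\Psi(V)$ (open sets lie in $\sigma^*$, and membership in $\sigma^*$ is equivalent to $V\subseteq\Psi(V)$), so $y\in V\subseteq\Psi(V)$; granting a preimage $x_0$ with $f(x_0)=y$, this forces $x_0\in U$, so $U\in\tau(x_0)$.

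Next I would estimate $U\setminus A$. Using injectivity, $f^{-1}[f[A]]=A$, hence $X\setminus A=f^{-1}[Y\setminus f[A]]$, and therefore $U\setminus A=f^{-1}[\Psi(V)]\cap f^{-1}[Y\setminus f[A]]=f^{-1}[\Psi(V)\setminus f[A]]$. So it suffices to show $\Psi(V)\setminus f[A]\in\mathcal{J}$ and then invoke $f^{-1}[\mathcal{J}]\subseteq\mathcal{I}$. I would bound $\Psi(V)\setminus f[A]\subseteq(\Psi(V)\setminus V)\cup(V\setminus f[A])$: the second piece lies in $\mathcal{J}$ by the choice of $V$, and for the first I would prove the compatibility consequence $\Psi(V)\setminus V\in\mathcal{J}$. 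This is exactly where $\mathcal{J}\sim\sigma$ enters: writing $S=\Psi(V)\setminus V\subseteq Y\setminus V$, each $z\in S$ satisfies $z\notin(Y\setminus V)^*$, so some $W\in\sigma(z)$ has $W\cap(Y\setminus V)\in\mathcal{J}$, whence $W\cap S\in\mathcal{J}$; compatibility applied to the locally small set $S$ then yields $S\in\mathcal{J}$. Combining the pieces gives $\Psi(V)\setminus f[A]\in\mathcal{J}$, so $U\setminus A\in\mathcal{I}$, i.e.\ $x_0\in\Psi(A)$ and $y=f(x_0)\in f[\Psi(A)]$.

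The step I expect to be the main obstacle is the one deferred above: securing an actual preimage $x_0$ of $y$, i.e.\ that $y\in f[X]$. The transport argument itself is clean, but $f[\Psi(A)]\subseteq f[X]$, so the inclusion can only reach points of $\Psi(f[A])$ that lie in the range of $f$; I would therefore look to extract $y\in f[X]$ from the hypotheses, and this is precisely the point at which the surjectivity (or density of $f[X]$ in $Y$) must be used, the injectivity alone governing only the set-theoretic identities $f^{-1}[f[A]]=A$. I would treat the compatibility translation $\Psi(V)\setminus V\in\mathcal{J}$ as the second, more technical, pressure point, but one settled directly from the definition of $\mathcal{J}\sim\sigma$ as indicated.
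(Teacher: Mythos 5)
Your write-up has to stand on its own here: the paper does not prove this theorem at all, it only quotes it from \cite{HRStar} in its ``Previous results'' section. Judged on its own, the transport part of your argument is correct and uses each hypothesis exactly where it is needed: continuity with respect to $\langle\Psi(\sigma)\rangle$ gives $U=f^{-1}[\Psi(V)]\in\tau$; the inclusion $V\subseteq\Psi(V)$ for $\sigma$-open $V$ places $x_0$ in $U$; injectivity gives $U\setminus A=f^{-1}[\Psi(V)\setminus f[A]]$; compatibility gives $\Psi(V)\setminus V\in\mathcal{J}$; and $f^{-1}[\mathcal{J}]\subseteq\mathcal{I}$ closes the estimate. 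What this establishes is precisely $\Psi(f[A])\cap f[X]\subseteq f[\Psi(A)]$.

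The step you defer --- securing a preimage $x_0$ of $y$, i.e.\ $y\in f[X]$ --- is, however, not a step that can be extracted from the hypotheses: the statement as quoted is false for non-surjective $f$, so the gap is unfillable. Take $X=\{0\}$ with $\tau=\{\emptyset,X\}$ and $\mathcal{I}=\{\emptyset\}$, and $Y=\{0,1\}$ with $\sigma$ discrete and $\mathcal{J}=\{\emptyset,\{1\}\}$, $f(0)=0$. Then $B^*=\{y\in B:\{y\}\notin\mathcal{J}\}=B\cap\{0\}$ for every $B\subseteq Y$, so $\Psi(\sigma)=\{\{1\},Y\}$ and $\langle\Psi(\sigma)\rangle=\{\emptyset,\{1\},Y\}$; thus $f$ is a continuous injection into $\langle Y,\langle\Psi(\sigma)\rangle\rangle$, $\mathcal{J}\sim\sigma$ holds (the only sets that are locally in $\mathcal{J}$ are $\emptyset$ and $\{1\}$, both in $\mathcal{J}$), and $f^{-1}[\mathcal{J}]=\{\emptyset\}\subseteq\mathcal{I}$. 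Yet for $A=\emptyset$ we get $\Psi(f[A])=Y\setminus Y^*=\{1\}$ while $f[\Psi(A)]=\emptyset$: the inclusion fails exactly at the point $1\notin f[X]$ that your argument cannot reach. Compatibility does force the exceptional set to be small ($\Psi(f[A])\setminus f[A]\in\mathcal{J}$), but not empty; and your fallback of assuming $f[X]$ dense does not help either (replace $\sigma$ by the Sierpi\'nski topology $\{\emptyset,\{0\},Y\}$ and take $A=X$). This is consistent with the paper's own treatment: its Theorem \ref{contpsi} proves the analogous inclusion assuming a \emph{bijection}, and the example immediately after it shows surjectivity cannot be dropped. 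So your diagnosis of the pressure point was exactly right; the correct conclusion is that the theorem as quoted needs surjectivity added (or its conclusion weakened to $\Psi(f[A])\cap f[X]\subseteq f[\Psi(A)]$, which is what your proof gives), not that a cleverer extraction of $y\in f[X]$ is waiting to be found.
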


\begin{thm} [\cite{HRStar}] Let $\langle X, \tau, \mathcal{I}\rangle$ and  $\langle Y, \sigma, \mathcal{J} \rangle$  be ideal topological spaces. Let $f:\langle X, \langle \Psi(\tau)\rangle \rangle \to \langle Y, \sigma \rangle$ be an open bijection, $\mathcal{I} \sim \tau$ and $f[\mathcal{I}]\subset \mathcal{J}$. Then $f[\Psi(A)] \subseteq \Psi(f[A])$, for each $A \subseteq X$.
\end{thm}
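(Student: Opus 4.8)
The plan is to prove the inclusion directly by tracing a single point through the definition of $\Psi$, exploiting that $f$ is open \emph{from} the topology $\langle\Psi(\tau)\rangle$. Since $f$ is a bijection, $Y\setminus f[A]=f[X\setminus A]$, so writing $B:=X\setminus A$ the two operators unfold as $\Psi(A)=X\setminus B^*_{(\tau,\mathcal I)}$ and $\Psi(f[A])=Y\setminus (f[B])^*_{(\sigma,\mathcal J)}$. Taking complements and using bijectivity, the goal $f[\Psi(A)]\subseteq\Psi(f[A])$ is equivalent to the single inclusion $(f[B])^*_{(\sigma,\mathcal J)}\subseteq f[B^*_{(\tau,\mathcal I)}]$ for every $B\subseteq X$; by bijectivity this is in turn the contrapositive statement: from $x\notin B^*_{(\tau,\mathcal I)}$ deduce $f(x)\notin (f[B])^*_{(\sigma,\mathcal J)}$.

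First I would unpack $x\notin B^*_{(\tau,\mathcal I)}$: there is $U\in\tau(x)$ with $B\cap U\in\mathcal I$. The crucial move is to replace $U$ by $\Psi(U)$. Because $U\in\tau\subseteq\tau^*$ we have $U\subseteq\Psi(U)$, hence $x\in\Psi(U)$, and $\Psi(U)\in\Psi(\tau)\subseteq\langle\Psi(\tau)\rangle$ is open in the domain topology on which $f$ is assumed open. Therefore $f[\Psi(U)]\in\sigma$ and $f(x)\in f[\Psi(U)]$; that is, $f[\Psi(U)]$ is a genuine $\sigma$-neighbourhood of $f(x)$, the witness I will feed to the definition of $(f[B])^*_{(\sigma,\mathcal J)}$.

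The heart of the argument is the claim $B\cap\Psi(U)\in\mathcal I$, which is where compatibility enters. Writing $C:=X\setminus U$, so that $\Psi(U)=X\setminus C^*_{(\tau,\mathcal I)}$, I split
$$B\cap\Psi(U)=\big((B\cap U)\cap\Psi(U)\big)\cup\big(B\cap(C\setminus C^*_{(\tau,\mathcal I)})\big).$$
The first piece is contained in $B\cap U\in\mathcal I$; the second is contained in $C\setminus C^*_{(\tau,\mathcal I)}$. Here I invoke that $\mathcal I\sim\tau$ forces $C\setminus C^*_{(\tau,\mathcal I)}\in\mathcal I$ for every set $C$: every point of $C\setminus C^*$ admits a $U'\in\tau$ with $(C\setminus C^*)\cap U'\subseteq C\cap U'\in\mathcal I$, and compatibility upgrades this ``locally small'' property to the global membership $C\setminus C^*\in\mathcal I$. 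By (I1) and (I2) both pieces, and hence $B\cap\Psi(U)$, lie in $\mathcal I$.

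Finally I push this through $f$: by $f[\mathcal I]\subset\mathcal J$ we get $f[B\cap\Psi(U)]\in\mathcal J$, and bijectivity gives $f[B\cap\Psi(U)]=f[B]\cap f[\Psi(U)]$. So the $\sigma$-neighbourhood $f[\Psi(U)]$ of $f(x)$ meets $f[B]$ in a $\mathcal J$-set, which is exactly $f(x)\notin (f[B])^*_{(\sigma,\mathcal J)}$; reinterpreting through the complement identities above yields $f(x)\in\Psi(f[A])$ and completes the proof. The step I expect to be delicate is the passage from $U$ to $\Psi(U)$: openness of $f$ is available only on $\langle\Psi(\tau)\rangle$, so the $\tau$-neighbourhood $U$ alone is useless, and compatibility $\mathcal I\sim\tau$ is precisely the hypothesis ensuring that enlarging $U$ to $\Psi(U)$ does not spoil the membership $B\cap U\in\mathcal I$.
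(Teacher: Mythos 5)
Your proof is correct, but note that the paper itself offers no proof of this statement: it is quoted as background (Theorem from Hamlett and Rose \cite{HRStar}) and cited without argument, so there is nothing in-paper to compare against. Checking your argument on its own merits: the reduction via bijectivity to $(f[B])^*\subseteq f[B^*]$ is valid; the decomposition $B\cap\Psi(U)=\bigl((B\cap U)\cap\Psi(U)\bigr)\cup\bigl(B\cap(C\setminus C^*)\bigr)$ with $C=X\setminus U$ is an identity; the lemma that compatibility forces $C\setminus C^*\in\mathcal{I}$ for every $C$ is exactly the right use of $\mathcal{I}\sim\tau$; and the passage from $U$ to $\Psi(U)$ is indeed the crux, since openness of $f$ is only available on $\langle\Psi(\tau)\rangle$ (which is \emph{coarser} than $\tau$, as each $\Psi(U)$ is $\tau$-open, so $\tau$-openness of $U$ buys nothing). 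A slightly more economical route, likely closer to the original, runs globally rather than pointwise: compatibility gives $\Psi(A)\setminus A=(X\setminus A)\setminus(X\setminus A)^*\in\mathcal{I}$ and also $(E^*)^*=E^*$ for all $E$, whence $\Psi(\Psi(A))=\Psi(A)$; since $\Psi(A)$ is $\tau$-open, this puts $\Psi(A)$ itself in $\Psi(\tau)$, so $f[\Psi(A)]\in\sigma$ directly, and then $f[\Psi(A)]\cap(Y\setminus f[A])=f[\Psi(A)\setminus A]\in\mathcal{J}$ shows every point of $f[\Psi(A)]$ lies in $\Psi(f[A])$ in one step. Your neighborhood-by-neighborhood version uses the same three ingredients (compatibility, openness on $\langle\Psi(\tau)\rangle$, bijectivity) and is complete as written; it merely trades the single global witness $\Psi(A)$ for local witnesses $\Psi(U)$.
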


\begin{thm} [\cite{HRStar}] \label{homeo} Let $\langle X, \tau, \mathcal{I}\rangle \to \langle Y, \sigma, \mathcal{J} \rangle$ be ideal topological spaces. Let $f:  X \to \  Y $ be a  bijection  and $f[\mathcal{I}]= \mathcal{J}$. Then the following conditions are equivalent

a) $f:\langle X, \tau^*\rangle \to \langle Y, \sigma^*\rangle$ is a homeomorphism;

b) $f[A^*]=(f[A])^*$, for each $A \subseteq X$;

 c) $ f[\Psi(A)] = \Psi(f[A])$, for each $A \subseteq X$.
\end{thm}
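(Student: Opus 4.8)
The plan is to establish the cycle by proving $b)\Leftrightarrow c)$ directly, then $b)\Rightarrow a)$, and finally $a)\Rightarrow b)$; since the first and last together give $a)\Leftrightarrow b)$, all three conditions become equivalent. I would treat $b)\Leftrightarrow c)$ first, as it is purely formal: because $f$ is a bijection one has $f[X\setminus S]=Y\setminus f[S]$ and $Y\setminus f[A]=f[X\setminus A]$, so from $\Psi(A)=X\setminus(X\setminus A)^*$ I would compute $f[\Psi(A)]=Y\setminus f[(X\setminus A)^*]$ while $\Psi(f[A])=Y\setminus(f[X\setminus A])^*$. Hence $f[\Psi(A)]=\Psi(f[A])$ for all $A$ is equivalent to $f[(X\setminus A)^*]=(f[X\setminus A])^*$ for all $A$, and letting $B=X\setminus A$ range over all subsets of $X$, this is exactly $b)$.

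For $b)\Rightarrow a)$ I would use $b)$ to upgrade it to an identity for the $\tau^*$-closure. Since $\Cl^*_\tau(A)=A\cup A^*$ and $f$ is a bijection, $b)$ gives $f[\Cl^*_\tau(A)]=f[A]\cup f[A^*]=f[A]\cup(f[A])^*=\Cl^*_\sigma(f[A])$ for every $A\subseteq X$. A bijection satisfying $f[\overline{A}]=\overline{f[A]}$ in the starred topologies is a homeomorphism: continuity of $f$ is immediate from Proposition~\ref{continuityeq}(c), and for $f^{-1}$ I would put $A=f^{-1}[B]$ to obtain $f^{-1}[\overline{B}^{\sigma^*}]=\overline{A}^{\tau^*}=\overline{f^{-1}[B]}^{\tau^*}$, which yields continuity of $f^{-1}$ by the same proposition. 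This proves $a)$.

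The implication $a)\Rightarrow b)$ is where the real work lies, and I expect it to be the main obstacle, because the local function is defined through the \emph{original} topologies $\tau,\sigma$, whereas $a)$ only controls the \emph{starred} topologies $\tau^*,\sigma^*$. The way around this is the auxiliary fact that the local function is unchanged when $\tau$ is replaced by $\tau^*$, that is $A^*_{(\tau,\mathcal{I})}=A^*_{(\tau^*,\mathcal{I})}$. I would prove it by noting that $\tau\subseteq\tau^*$ gives one inclusion, and for the other, using that the sets $V\setminus I$ with $V\in\tau$, $I\in\mathcal{I}$ form a base of $\tau^*$, so any $\tau^*$-neighbourhood $U$ of $x$ contains some $V\setminus I$ with $V\in\tau(x)$; then $A\cap U\supseteq(A\cap V)\setminus I$, and by properties (I1),(I2) of the ideal, $A\cap V\notin\mathcal{I}$ forces $(A\cap V)\setminus I\notin\mathcal{I}$, hence $A\cap U\notin\mathcal{I}$.

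With this lemma in hand the computation closes. Fix $A\subseteq X$ and $y=f(x)$. Because $f$ is a homeomorphism of $\tau^*$ onto $\sigma^*$, the assignment $U\mapsto f[U]$ is a bijection of $\tau^*(x)$ onto $\sigma^*(y)$, so using the lemma I would write $y\in(f[A])^*_{(\sigma,\mathcal{J})}=(f[A])^*_{(\sigma^*,\mathcal{J})}$ iff $f[A]\cap f[U]\notin\mathcal{J}$ for every $U\in\tau^*(x)$. Since $f$ is a bijection, $f[A]\cap f[U]=f[A\cap U]$, and since $\mathcal{J}=f[\mathcal{I}]$ we have $f[A\cap U]\notin\mathcal{J}$ iff $A\cap U\notin\mathcal{I}$. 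Thus the condition becomes $A\cap U\notin\mathcal{I}$ for all $U\in\tau^*(x)$, i.e. $x\in A^*_{(\tau^*,\mathcal{I})}=A^*_{(\tau,\mathcal{I})}$, equivalently $y\in f[A^*]$. Hence $(f[A])^*=f[A^*]$, which is $b)$, completing the cycle. The only delicate point requiring care is the invariance lemma, together with the fact that a bijection transports ideal membership exactly (i.e. $f[S]\in f[\mathcal{I}]\Leftrightarrow S\in\mathcal{I}$), which is precisely where the hypothesis $f[\mathcal{I}]=\mathcal{J}$ is genuinely used.
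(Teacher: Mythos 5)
Your proof is correct, but there is no in-paper proof to compare it against: Theorem~\ref{homeo} is quoted in the ``Previous results'' section as a theorem of Hamlett and Rose \cite{HRStar}, and the paper never proves it, only extends it (in its closing Corollary). Comparing your route with the machinery the paper builds nearby: your step b) $\Leftrightarrow$ c) is exactly the complementation trick ($f[X\setminus S]=Y\setminus f[S]$ for a bijection) that the paper itself uses in \eqref{eq1} and \eqref{eq2}; your step b) $\Rightarrow$ a) corresponds to the paper's splitting of the homeomorphism conclusion into a continuity half coming from $f[A^*]\subseteq (f[A])^*$ (Theorem~\ref{Tc2}) and an openness half coming from $(f[A])^*\subseteq f[A^*]$, equivalently $f[\Psi(A)]\subseteq \Psi(f[A])$ (the open-mapping theorem following Theorem~\ref{To1}). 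The part that none of the paper's results can deliver is a) $\Rightarrow$ b): every theorem in the paper hypothesizes continuity or openness with respect to the \emph{original} topologies, whereas a) gives information only about $\tau^*$ and $\sigma^*$; accordingly, the paper's Corollary is a one-directional statement (a homeomorphism of the original spaces implies all the starred conditions), not an equivalence. Your invariance lemma $A^*_{(\tau,\mathcal{I})}=A^*_{(\tau^*,\mathcal{I})}$ is precisely the bridge across that gap, and you correctly isolate it as the crux; with it in hand one could alternatively finish by applying the paper's Theorems~\ref{Tc1} and~\ref{openbij} to the ideal topological spaces $\langle X,\tau^*,\mathcal{I}\rangle$ and $\langle Y,\sigma^*,\mathcal{J}\rangle$ instead of redoing the neighbourhood computation by hand, so the two approaches converge once the lemma is available. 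Two small points of rigor: the basis $\{V\setminus I: V\in\tau,\ I\in\mathcal{I}\}$ of $\tau^*$ on which your lemma rests is itself an imported fact from \cite{JH}; if you want the argument self-contained, observe instead that $U\in\tau^*(x)$ means $(X\setminus U)^*\subseteq X\setminus U$, hence $x\notin (X\setminus U)^*$, which yields $W\in\tau(x)$ with $W\setminus U\in\mathcal{I}$, and the same hereditary-plus-additive argument closes. Finally, your remark that a bijection transports ideal membership exactly, $f[S]\in f[\mathcal{I}]\Leftrightarrow S\in\mathcal{I}$, is indeed the only place where the full hypothesis $f[\mathcal{I}]=\mathcal{J}$ together with injectivity is genuinely used.
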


Recently, at this topic also worked: Kuyucu,  Noiri and Özkurt \cite{KNO}; Özkurt \cite{MR2176209}; Ekici \cite{MR3098388}; 	 Al-Omari and Noiri \cite{MR3252455};  	Vadivel  and Navuluri \cite{MR3595518};  Goyal and Noorie \cite{GoyalNoorie}.

\section{Local function and continuity}

In this section we will start with continuous mappings with original topologies and we will give some sufficient condition when  continuity is preserved in spaces with idealized topologies.

\begin{thm} \label{Tc1} Let $\langle X, \tau_X, \mathcal{I}_X\rangle$ and $\langle Y, \tau_Y, \mathcal{I}_Y\rangle$ be ideal topological spaces. If $f:\langle X, \tau_X\rangle \to \langle Y, \tau_Y\rangle$ is a continuous function and for all $I\in \mathcal{I}_Y$ we have $f^{-1}[I]\in \mathcal{I}_X$, then there hold the following equivalent conditions:

a) $\forall A \subseteq X~f[A^*]\subseteq (f[A])^*;$

b) $\forall B \subseteq Y~(f^{-1}[B])^*\subseteq f^{-1}[B^*].$

\end{thm}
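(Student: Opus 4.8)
The plan is to first dispatch the equivalence of (a) and (b) by a purely formal argument that uses only monotonicity of the local function, and then to establish (b) directly by a point-chasing argument exploiting continuity together with the preimage hypothesis on the ideals.

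For the equivalence, I would proceed symmetrically. To get (a)$\Rightarrow$(b), fix $B\subseteq Y$ and apply (a) to $A=f^{-1}[B]$; since $f[f^{-1}[B]]\subseteq B$, property (1) gives $(f[f^{-1}[B]])^*\subseteq B^*$, so $f[(f^{-1}[B])^*]\subseteq B^*$, and passing to preimages yields $(f^{-1}[B])^*\subseteq f^{-1}[B^*]$. Conversely, for (b)$\Rightarrow$(a), fix $A\subseteq X$ and apply (b) to $B=f[A]$; since $A\subseteq f^{-1}[f[A]]$, monotonicity gives $A^*\subseteq (f^{-1}[f[A]])^*\subseteq f^{-1}[(f[A])^*]$, and taking images yields $f[A^*]\subseteq (f[A])^*$. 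Both directions rest only on property (1) and the tautologies $f[f^{-1}[B]]\subseteq B$ and $A\subseteq f^{-1}[f[A]]$, so neither hypothesis on $f$ is needed for the equivalence itself.

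It then remains to prove, say, (b). Fix $B\subseteq Y$ and take $x\in (f^{-1}[B])^*$; the goal is $f(x)\in B^*$. Let $V\in\tau_Y(f(x))$ be arbitrary. Continuity of $f:\langle X,\tau_X\rangle\to\langle Y,\tau_Y\rangle$ makes $f^{-1}[V]$ an open $\tau_X$-neighbourhood of $x$, so the assumption $x\in (f^{-1}[B])^*$ gives $f^{-1}[B]\cap f^{-1}[V]\notin\mathcal{I}_X$. Rewriting the left-hand side as $f^{-1}[B\cap V]$ and invoking the hypothesis in contrapositive form---were $B\cap V$ in $\mathcal{I}_Y$, then $f^{-1}[B\cap V]$ would lie in $\mathcal{I}_X$---forces $B\cap V\notin\mathcal{I}_Y$. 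As $V$ was arbitrary, $f(x)\in B^*$, that is $x\in f^{-1}[B^*]$, and (b) follows.

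I do not expect a serious obstacle; the argument is careful bookkeeping of neighbourhoods and ideals. The single step demanding attention is the direction in which the hypothesis ``$I\in\mathcal{I}_Y \Rightarrow f^{-1}[I]\in\mathcal{I}_X$'' is used: it enters only through its contrapositive, converting ``$f^{-1}[B\cap V]$ is not small in $X$'' into ``$B\cap V$ is not small in $Y$'', which is exactly what couples the two local functions. Continuity is used solely to guarantee that preimages of $\tau_Y$-neighbourhoods of $f(x)$ are $\tau_X$-neighbourhoods of $x$; notably, no condition on the forward images $f[\mathcal{I}_X]$ is required here, in contrast with the homeomorphism results quoted earlier.
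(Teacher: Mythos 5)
Your proof is correct, and your treatment of the equivalence a) $\Leftrightarrow$ b) is essentially word-for-word the paper's: apply the assumed condition to $f^{-1}[B]$ (resp.\ $f[A]$), use monotonicity of the local function together with $f[f^{-1}[B]]\subseteq B$ and $A\subseteq f^{-1}[f[A]]$, and pass to images or preimages. Where you diverge is in which condition you establish directly. The paper proves a) by contradiction: from $y\in f[A^*]\setminus(f[A])^*$ it extracts $V\in\tau_Y(y)$ with $V\cap f[A]\in\mathcal{I}_Y$ and pulls this back to $f^{-1}[V]\cap A\in\mathcal{I}_X$, a step that needs the hereditary property (I1) of ideals together with the inclusions $f^{-1}[V]\cap A\subseteq f^{-1}[V]\cap f^{-1}[f[A]]\subseteq f^{-1}[V\cap f[A]]$, before contradicting $x\in A^*$ via continuity. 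You instead prove b) directly, which is slightly cleaner: because preimages commute with intersections exactly, $f^{-1}[B]\cap f^{-1}[V]=f^{-1}[B\cap V]$, so the hypothesis $I\in\mathcal{I}_Y\Rightarrow f^{-1}[I]\in\mathcal{I}_X$ applies verbatim (in contrapositive) with no appeal to heredity or to the slack in $f^{-1}[f[A]]\supseteq A$. Both arguments use continuity in exactly the same way, namely that $f^{-1}[V]\in\tau_X(x)$ whenever $V\in\tau_Y(f(x))$; your version simply reveals that the preimage formulation b) is the more natural one to check straight from the definition of the local function, with a) then following formally.
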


\begin{proof}
Let us prove that a) holds. Suppose that there exists $A \subseteq X$  such that there exists $y \in f[A^*]\setminus (f[A])^*$. So, there exists $x\in A^*$ such that $f(x)=y$ and
\begin{equation}\label{eqc1}
\forall U \in \tau_X(x)~\quad  U \cap A \not \in \mathcal{I}_X.
 \end{equation}
 Since $y \not \in (f[A])^*$, there exists $V \in \tau_Y(y)$ such that $V \cap f[A] \in \mathcal{I}_Y$. Then, $f^{-1}[V \cap f[A]] \in \mathcal{I}_X$. Since, for each $C,D \in Y$, holds $f^{-1}[C ]\cap f^{-1}[D]  \subseteq f^{-1}[C \cap D]$, by the hereditary property of ideals, we have $f^{-1}[V] \cap f^{-1}[f[A]] \in \mathcal{I}_X$, and since $A \subseteq f^{-1}[f[A]]$, there holds \begin{equation} \label{eqc2}f^{-1}[V] \cap A \in \mathcal{I}_X.
 \end{equation}
 Finally, due to the continuity of $f$, $f^{-1}[V]\in \tau_X(x)$, so \eqref{eqc2} contradicts \eqref{eqc1}, proving a).

 Let us show that b) is equivalent to a). Suppose a) holds and let $B \subseteq Y$.  Then $f[(f^{-1}[B])^*]\subseteq (f[f^{-1}[B]])^*$, and, since $f[f^{-1}[B]]\subseteq B$, then $f[(f^{-1}[B])^*] \subseteq B^*$. Now, by taking the preimage of both sets, we have $f^{-1}[f[(f^{-1}[B])^*]] \subseteq f^{-1}[B^*]$, and, since $f^{-1}[f[C]] \supseteq C$, for each $C\subseteq X$, we obtain $(f^{-1}[B])^* \subseteq f^{-1}[B^*]$.

 Now suppose b) holds and let $A\subseteq X$. Then $f^{-1}[(f[A])^*]\supseteq (f^{-1}[f[A]])^* \supseteq A^*$. By taking the direct image by $f$ of both sets we obtain $f[f^{-1}[(f[A])^*]]\supseteq f[A^*]$. Finally, since $f[f^{-1}[D]] \subseteq D$, for each $D \subseteq Y$, we obtain $(f[A])^*\supseteq f[A^*]$.
\end{proof}

\begin{rem} Condition a) in case of  bijective mappings is, by the result of  Sivaraj and Renuka Devi \cite{MR2247422},   equivalent with pointwise-$\mathcal{I}$-continuity, introduced  by  Kaniewski and Piotrowski \cite{KanPiot}. Also, some interesting sufficient conditions for  a) can be found in the work of Goyal and Noorie \cite{GoyalNoorie}.
\end{rem}

\begin{ex} The opposite does not hold even if for each $I \in \mathcal{I}_Y$ holds $f^{-1}[I] \in \mathcal{I}_X$. If $ \mathcal{I}_X=P(X)$, then for each $A \subseteq X$ we have $A^*=\emptyset$, so a) trivially holds, and also, for each $I \in \mathcal{I}_Y$ we have $f^{-1}[I] \in \mathcal{I}_X=P(X)$. But $f$, in general, does not have to be continuous.
\end{ex}

\begin{thm} \label{Tc2} Let $\langle X, \tau_X, \mathcal{I}_X\rangle$ and $\langle Y, \tau_Y, \mathcal{I}_Y\rangle$ be ideal topological spaces. If $f:\langle X, \tau_X\rangle \to \langle Y, \tau_Y\rangle$ is a continuous function and for all $I\in \mathcal{I}_Y$ we have $f^{-1}[I]\in \mathcal{I}_X$, then there hold the following three equivalent conditions:

a) $\forall A \subseteq X~f[\overline{A}^{\tau_X^*}] \subseteq \overline{f[A]}^{\tau_Y^*}$;

b) $\forall B \subseteq Y~\overline{(f^{-1}[B])}^{\tau_X^*}\subseteq f^{-1}[\overline{B}^{\tau_Y^*}];$

c) $f:\langle X, \tau_X^*\rangle \to \langle Y, \tau_Y^*\rangle$ is a continuous function.

\end{thm}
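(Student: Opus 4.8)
The plan is to separate the two assertions packaged in the statement: first that the three conditions are mutually equivalent, and second that they actually hold. The equivalence requires essentially no new work. Since $\tau_X^*$ and $\tau_Y^*$ are genuine topologies on $X$ and $Y$ (as recalled in Section~2), I would simply view $f$ as a map $\langle X, \tau_X^*\rangle \to \langle Y, \tau_Y^*\rangle$ and apply Proposition~\ref{continuityeq} to it. Under this reading, condition a) is exactly clause c) of that proposition (with closures taken in $\tau_X^*$ and $\tau_Y^*$), condition b) is clause d), and condition c) is clause a). Hence a) $\Leftrightarrow$ b) $\Leftrightarrow$ c) is immediate, and it only remains to verify that one of them is satisfied.

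I would verify a) directly, reducing it to Theorem~\ref{Tc1}. The key is the description of the $\tau^*$-closure recalled in the preliminaries, namely $\overline{A}^{\tau_X^*} = \Cl^*(A) = A \cup A^*_{(\tau_X,\mathcal{I}_X)}$, and likewise on $Y$. Applying $f$ and using that direct image commutes with unions gives $f[\overline{A}^{\tau_X^*}] = f[A] \cup f[A^*]$. The hypotheses of the present theorem, continuity of $f\colon\langle X,\tau_X\rangle \to \langle Y,\tau_Y\rangle$ together with $f^{-1}[I] \in \mathcal{I}_X$ for every $I \in \mathcal{I}_Y$, are precisely the hypotheses of Theorem~\ref{Tc1}, so its conclusion a) yields $f[A^*] \subseteq (f[A])^*$. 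Combining these, $f[\overline{A}^{\tau_X^*}] \subseteq f[A] \cup (f[A])^* = \overline{f[A]}^{\tau_Y^*}$, which is exactly condition a).

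I do not expect a genuine obstacle; the argument is in effect a translation between the local-function language of Theorem~\ref{Tc1} and the $\tau^*$-closure language of the three stated conditions. The only points that demand care are, first, recording that the closure operator of $\tau^*$ is $A \mapsto A \cup A^*$, so that condition a) reduces to the inclusion $f[A^*] \subseteq f[A] \cup (f[A])^*$, and second, checking that the hypotheses transfer verbatim to Theorem~\ref{Tc1}, which they do. If one preferred to avoid citing Proposition~\ref{continuityeq}, the equivalence a) $\Leftrightarrow$ b) could be obtained by the same image/preimage manipulations used in the proof of Theorem~\ref{Tc1} (inserting $C \subseteq f^{-1}[f[C]]$ for $C \subseteq X$ and $f[f^{-1}[D]] \subseteq D$ for $D \subseteq Y$), and b) $\Leftrightarrow$ c) by the standard open-preimage characterization applied to the closed sets of $\tau_X^*$ and $\tau_Y^*$; but routing everything through Proposition~\ref{continuityeq} is cleaner and is clearly the intended path.
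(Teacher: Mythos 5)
Your proposal is correct and follows essentially the same path as the paper: the equivalence of a), b), c) is dispatched by Proposition~\ref{continuityeq}, and a) is then verified via the computation $f[\overline{A}^{\tau_X^*}] = f[A \cup A^*] = f[A] \cup f[A^*] \subseteq f[A] \cup (f[A])^* = \overline{f[A]}^{\tau_Y^*}$, invoking condition a) of Theorem~\ref{Tc1}. The paper additionally remarks that b) could instead be derived from b) of Theorem~\ref{Tc1}, which matches the alternative route you sketched.
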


\begin{proof}
Obviously, all three conditions are equivalent (see Proposition \ref{continuityeq}), so it will be sufficient to prove a), which easily  follows    from a) in the previous theorem. Therefore, we have
 \begin{eqnarray*}
 f[\overline{A}^{\tau_X^*}]&=&f[A \cup A^*]=f[A] \cup f[A^*]\\
 &\subseteq&f[A] \cup (f[A])^*
 =\overline{f[A]}^{\tau_Y^*}.
 \end{eqnarray*}
Just to remark that condition b) can also be easily obtained from condition b) in the previous theorem.
\end{proof}

\begin{ex}  Condition a) (and therefore b)) in Theorem \ref{Tc1} is not equivalent to the continuity of $f:\langle X, \tau_X^*\rangle \to \langle Y, \tau_Y^*\rangle$. Let $f$ be an arbitrary continuous  mapping from
$\langle X, \tau_X\rangle $ to $\langle Y, \tau_Y\rangle$, where both spaces are $T_1$. Let $x_0 \in X$, $f(x_0)=y_0$, $\mathcal{I}_X=\{A \subset X: |A|<\aleph_0, x_0 \not \in A\}$ and $\mathcal{I}_Y=Fin(Y)$. Then $\tau_X^*=\tau_X$ and $\tau_Y^*=\tau_Y$. So, $f:\langle X, \tau_X^*\rangle \to \langle Y, \tau_Y^*\rangle$ is continuous.

We see that $x_0 \in \{x_0\}^*=\{x_0\}$, so $y_0\in f[\{x_0\}^*]=\{y_0\}$. But $U \cap f[\{x_0\}]=\{y_0\}$, for each neighbourhood $U$ of $y_0$, which is in the ideal  $\mathcal{I}_Y$, so $y_0 \not\in (f[\{x_0\}])^*$, implying that condition a) does not hold.

Another example is to consider on $X$ the topology $\tau_X=P(X)$. Then each mapping, no matter what ideal is chosen, is continuous, so choosing the same ideals as before , we also get a counterexample.
\end{ex}

If we assume that $f$ is a bijection, we obtain the following result.

\begin{thm}\label{contpsi} Let $\langle X, \tau_X, \mathcal{I}_X\rangle$ and $\langle Y, \tau_Y, \mathcal{I}_Y\rangle$ be ideal topological spaces. If $f:\langle X, \tau_X\rangle \to \langle Y, \tau_Y\rangle$ is a continuous bijection and for all $I\in \mathcal{I}_Y$ we have $f^{-1}[I]\in \mathcal{I}_X$, then there hold the following equivalent conditions:

a) $\forall A \subseteq X~\Psi(f[A])\subseteq f[\Psi(A)];$

b) $\forall B \subseteq Y~f^{-1}[\Psi(B)]\subseteq \Psi(f^{-1}[B]).$

\end{thm}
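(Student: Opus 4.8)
The plan is to derive both inclusions directly from Theorem \ref{Tc1} by unfolding the definition $\Psi(A)=X\setminus(X\setminus A)^*$ and moving complements across $f$; the whole point of the strengthened hypothesis (continuous \emph{bijection}) is precisely to make this transport of complements legitimate.

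First I would prove a). Since $f$ is a bijection, it commutes with complementation, i.e. $f[X\setminus C]=Y\setminus f[C]$ for every $C\subseteq X$. Unfolding $\Psi$ on both sides gives $\Psi(f[A])=Y\setminus(f[X\setminus A])^*$ and $f[\Psi(A)]=Y\setminus f[(X\setminus A)^*]$. Taking complements in $Y$, the target inclusion $\Psi(f[A])\subseteq f[\Psi(A)]$ is therefore equivalent to $f[(X\setminus A)^*]\subseteq(f[X\setminus A])^*$, which is exactly condition a) of Theorem \ref{Tc1} applied to the set $X\setminus A$. As the hypotheses of Theorem \ref{Tc1} are in force, a) follows. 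I would remark in passing that b) can be obtained in the same way from condition b) of Theorem \ref{Tc1} applied to $Y\setminus B$; there one only needs that preimage commutes with complement, which holds for any function.

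Next I would record the equivalence a)$\Leftrightarrow$b), which rests solely on bijectivity. For a)$\Rightarrow$b), given $B\subseteq Y$ set $A=f^{-1}[B]$; surjectivity gives $f[A]=B$, so a) yields $\Psi(B)\subseteq f[\Psi(f^{-1}[B])]$, and applying $f^{-1}$ together with injectivity ($f^{-1}[f[C]]=C$) gives $f^{-1}[\Psi(B)]\subseteq\Psi(f^{-1}[B])$. The converse is symmetric: for $A\subseteq X$ put $B=f[A]$, use injectivity to get $f^{-1}[B]=A$, apply b), and push forward by $f$ using surjectivity.

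The only delicate point is the one already flagged: the local-function statement of Theorem \ref{Tc1} survives under a mere continuous map, but because $\Psi$ is built from a complement, the present statement genuinely needs $f$ to be bijective so that $f[X\setminus A]=Y\setminus f[A]$. Beyond keeping the complementations and the image/preimage identities straight, I expect no real obstacle; the argument is essentially Theorem \ref{Tc1} read through the correspondence $\Psi(A)=X\setminus(X\setminus A)^*$.
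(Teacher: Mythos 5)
Your proposal is correct and takes essentially the same route as the paper: both use bijectivity to move complements across $f$ (via $f[X\setminus C]=Y\setminus f[C]$), thereby unfolding $\Psi$ and reducing condition a) to condition a) of Theorem \ref{Tc1} applied to the set $X\setminus A$ --- the paper packages this as a proof by contradiction through a chain of set-difference identities, while you state it as a direct equivalence of inclusions, but the computation is the same, and your a)$\Leftrightarrow$b) argument uses the same $f\circ f^{-1}=\id$ manipulations as the paper's. Your passing remark that b) already follows from Theorem \ref{Tc1}(b) applied to $Y\setminus B$ for an arbitrary continuous $f$ (since preimages commute with complements for any function) is a correct and slightly sharper observation than what the paper records, but it does not alter the substance of the proof.
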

\begin{proof}
Let us firstly prove that conditions a) and b) are equivalent.

a) $\Rightarrow$ b) Suppose $B \subseteq Y$. Then, using the fact that $f\circ f^{-1}=\id$, we have $f^{-1}[\Psi(B)]=f^{-1}[\Psi(f[f^{-1}[B]])]\subseteq f^{-1}[f[\Psi(f^{-1}[B])]]=\Psi(f^{-1}[B])$.

b) $\Rightarrow$ a) Suppose $A \subseteq X$. Then, again using  $f\circ f^{-1}=\id$, we have
$\Psi(f[A])=f[f^{-1}[\Psi(f[A])]]\subseteq f[\Psi(f^{-1}[f[A]])]= f[\Psi(A)]$.

Now, let us prove that a) holds. Suppose that there exists $A \subseteq X$ such that $\Psi(f[A])\setminus f[\Psi(A)] \neq \emptyset$. So,
\begin{eqnarray} \label{eq1} \lefteqn{\Psi(f[A])\setminus f[\Psi(A)]}\\
\nonumber&=&(Y \setminus (Y\setminus f[A])^*)\setminus f[X \setminus (X\setminus A)^*] \\
\nonumber&=&(Y \setminus (Y\setminus f[A])^*)\setminus (f[X] \setminus f[(X\setminus A)^*]) \\
\nonumber&=&(Y \setminus (Y\setminus f[A])^*)\setminus (Y \setminus f[(X\setminus A)^*]) \\
\nonumber&=&f[(X\setminus A)^*]\setminus(Y\setminus f[A])^* \\
\nonumber&=&f[(X\setminus A)^*]\setminus(f[X]\setminus f[A])^* \\
&=&f[(X\setminus A)^*]\setminus(f[X\setminus A])^* \neq \emptyset, \nonumber
\end{eqnarray}
but this contradicts condition a) from Theorem \ref{Tc1}. Note that surjection is used in the fact that $f[X]=Y$, and injection for $f[X] \setminus f[A]=f[X \setminus A]$.
\end{proof}

\begin{rem}
This is also a proof that, if $f$ is a bijection, conditions a) and b) from the previous theorem and from Theorem \ref{Tc1} are equivalent.
If the set $B\subset X$ possibly violates condition a) from Theorem \ref{Tc1}, then just write it in the form $B=X\setminus A$ and apply \eqref{eq1}.
\end{rem}

\begin{ex}
Let us prove that if $f$ is not a surjection in Theorem \ref{contpsi}, but other conditions hold, then condition a) (and at the same time b)) does not hold.

Let $f:\langle X, \tau_X\rangle \to \langle Y, \tau_Y \rangle$ be a homeomorphism. Let $\mathcal{I}_X$ and $\mathcal{I}_Y$ be ideals preserved by homeomorphism, i.e.\ $I \in \mathcal{I}_X$ iff $f[I] \in \mathcal{I}_Y$.  Let $Z=Y \cup \{z\}$, where $z$ is a point outside $Y$, $\tau_Z=\{O \cup \{z\} : O \in \tau_Y\}\cup \{\emptyset\}$, and $\mathcal{I}_Z$ the smallest ideal containing $\mathcal{I}_Y$ and $\{z\}$. Finally, let $\tilde f:\langle X, \tau_X\rangle \to \langle Z, \tau_Z \rangle$ be defined as $\tilde{f}(x)=f(x)$. It is easy to see,  since $\tilde f^{-1}[A]=f^{-1}[A]=f^{-1}[A\setminus\{z\}]$ for each $A\subset Z$, we have  that $\tilde f$ is continuous and   for all $I\in \mathcal{I}_Z$ we have $f^{-1}[I]\in \mathcal{I}_X$.

Then, for each set $A \subset Z$ we have $z \not \in A^*$, i.e., for each $A$ we have $z \in \Psi(A)$, so $z \in \Psi(f[A]))$, but since  $z \not \in f[X]$ we have $ z \not \in  f[\Psi(A)]$.
\end{ex}

\begin{ex}
Let us prove that if $f$ is not an injection in Theorem \ref{contpsi}, and other conditions hold, then condition a) (and at the same time b)) does not hold.

Let $f:\langle X, \tau_X\rangle \to \langle Y \tau_Y \rangle$ be a homeomorphism. Let $\mathcal{I}_X$ and $\mathcal{I}_Y$ be ideals preserved by homeomorphism, i.e.\ $I \in \mathcal{I}_X$ iff $f[I] \in \mathcal{I}_Y$.
Let $y_0 \in Y$ be such that $\{y_0\}\in \mathcal{I}_Y$ and let $x_0$ be such that $f(x_0)=y_0$. Such $y_0$ always exists in case $\mathcal{I}_Y \neq \{\emptyset\}$.  Let $Z=X \cup \{z\}$, where $z$ is a point outside of $X$ and $\tau_Z=\{O : O \not \in \tau_X(x_0)\} \cup \{O \cup \{z\} : O \in \tau_X(x_0)\}$.
Then let $\mathcal{I}_Z=\{I\setminus \{x_0\}: I \in \mathcal{I}_X\}$   and
$\widetilde{\mathcal{I}_Y}=\{I\setminus \{y_0\}: I \in \mathcal{I}_Y\}$.

Finally, we define $\tilde f:\langle Z, \tau_Z,  \mathcal{I}_Z\rangle \to \langle Y, \tau_Y,  \widetilde{\mathcal{I}_Y} \rangle$ by $\tilde{f}(x)=f(x)$, for $x \in X$, and $f(z)=y_0$.

Let us prove that $\tilde f$ fulfils all conditions from Theorem \ref{contpsi} except injectivity.

It is easy to see that $\tilde f^{-1}[A]=f^{-1}[A]$ when $y_0 \not  \in A$, and $\tilde f^{-1}[A]=f^{-1}[A]\cup\{z\}$, if $y_0 \in A$. Then, for $O \in \tau_Y$, $\tilde f^{-1}[O]\in \tau_Z$, and, for each $I \in \tilde{\mathcal{I}_Y}$ we have $\tilde f^{-1}[I]=f^{-1}[I] \in \mathcal{I}_X \subset \mathcal{I}_Z$.

For $A=X$ we have $\tilde f[A]=Y$.   Also, $\Psi(Y)=Y  \setminus (Y \setminus Y)^*=Y$, so $y_0 \in  \Psi(\tilde  f[X])$. On the other hand, $\Psi(X)=Z \setminus \{z\}^*$. Since $\{z\} \not \in \mathcal{I}_Z$, and since each neighbourhood of $z$ and $x_0$ contains $z$, we have $x_0, z \in \{z\}^*$, implying $\Psi(X) \subseteq X \setminus \{x_0\}$, so $\tilde  f[\Psi(X)] \subseteq \tilde f[X \setminus \{x_0\}]=Y \setminus \{y_0\}$, therefore $y_0 \not \in \tilde f [\Psi(X)])$.
\end{ex}


\section{Local function and open and closed mappings}

In this section we will deal  with  open and closed mappings and their preservation in spaces obtained by the  local function.

\begin{thm} \label{To1} Let $\langle X, \tau_X, \mathcal{I}_X\rangle$ and $\langle Y, \tau_Y, \mathcal{I}_Y\rangle$ be ideal topological spaces. If $f:\langle X, \tau_X\rangle \to \langle Y, \tau_Y\rangle$ is an open function and for all $I\in \mathcal{I}_X$ we have $f[I]\in \mathcal{I}_Y$, then there hold the \textit{following} equivalent conditions:

a) $\forall A \subseteq X~f[\Psi(A)]\subseteq \Psi(f[A]);$

b) $\forall B \subseteq Y~\Psi(f^{-1}[B])\subseteq f^{-1}[\Psi(B)]$.

\end{thm}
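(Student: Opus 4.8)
The plan is to prove condition a) directly from the two hypotheses and then to deduce the equivalence of a) and b) by a purely set-theoretic argument, in analogy with the proofs of Theorems \ref{Tc1} and \ref{Tc2}. Conceptually the whole statement is the natural dual of Theorem \ref{Tc1}: there, continuity together with $f^{-1}[\mathcal{I}_Y]\subseteq\mathcal{I}_X$ controlled the local function $A\mapsto A^*$, whereas here openness together with $f[\mathcal{I}_X]\subseteq\mathcal{I}_Y$ should control the operator $\Psi$.

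For a) I would begin by unwinding the definition $\Psi(A)=X\setminus(X\setminus A)^*$. Fix $y\in f[\Psi(A)]$ and write $y=f(x)$ with $x\in\Psi(A)$, i.e.\ $x\notin(X\setminus A)^*$. By the definition of the local function this produces a neighbourhood $U\in\tau_X(x)$ with $U\setminus A=(X\setminus A)\cap U\in\mathcal{I}_X$. The goal is to exhibit a witnessing neighbourhood of $y$ showing $y\notin(Y\setminus f[A])^*$. Since $f$ is open, $f[U]$ is the natural candidate: it is open and contains $y$. The key step, where both hypotheses are used at once, is the inclusion $f[U]\setminus f[A]\subseteq f[U\setminus A]$; this holds because any $y'=f(x')\in f[U]\setminus f[A]$ forces $x'\in U$ together with $x'\notin A$, so $x'\in U\setminus A$. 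Combined with $f[U\setminus A]\in\mathcal{I}_Y$ (the image-of-ideal hypothesis) and heredity (I1) of ideals, this yields $(Y\setminus f[A])\cap f[U]\in\mathcal{I}_Y$, so $y\notin(Y\setminus f[A])^*$, which is exactly $y\in\Psi(f[A])$.

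For the equivalence of a) and b) the only facts I would need are the monotonicity of $\Psi$ (immediate from property (1) of the local function, since $A\subseteq B$ gives $(X\setminus B)^*\subseteq(X\setminus A)^*$, hence $\Psi(A)\subseteq\Psi(B)$) together with the elementary inclusions $f[f^{-1}[B]]\subseteq B$ and $C\subseteq f^{-1}[f[C]]$ valid for any function. To obtain a) $\Rightarrow$ b), apply a) to $A=f^{-1}[B]$, use $f[f^{-1}[B]]\subseteq B$ and monotonicity of $\Psi$ to reach $f[\Psi(f^{-1}[B])]\subseteq\Psi(B)$, and then take preimages. The converse b) $\Rightarrow$ a) is symmetric: apply b) to $B=f[A]$, use $A\subseteq f^{-1}[f[A]]$ and monotonicity, and then take direct images. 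Note that, unlike the bijective case of Theorem \ref{contpsi}, neither injectivity nor surjectivity is required here, precisely because only the one-sided inclusions are used rather than the identities $f\circ f^{-1}=\id$.

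I do not anticipate a genuine obstacle. The only point demanding care is the direct proof of a), namely verifying that $f[U]$ really is the correct witness and that the inclusion $f[U]\setminus f[A]\subseteq f[U\setminus A]$ is valid; this is the single place at which the openness of $f$ and the condition $f[\mathcal{I}_X]\subseteq\mathcal{I}_Y$ are invoked simultaneously, and everything else reduces to the formal manipulation of images, preimages, and the monotone operator $\Psi$.
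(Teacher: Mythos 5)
Your proposal is correct and takes essentially the same route as the paper's own proof: the same witness $f[U]$ for the neighbourhood of $y$, the same key inclusion $f[U]\setminus f[A]\subseteq f[U\setminus A]$ combining openness with the hypothesis $f[\mathcal{I}_X]\subseteq\mathcal{I}_Y$, and the same image/preimage manipulations (with monotonicity of $\Psi$) for the equivalence of a) and b). The only difference is cosmetic — you prove a) directly where the paper argues by contradiction.
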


\begin{proof}Firstly, let us prove a). For $A \subseteq X$ suppose that there exists $y \in f[\Psi(A)]\setminus \Psi(f[A])$. Then there exists $x\in \Psi(A)$ such that $f(x)=y$. So, $x \in X \setminus (X\setminus A)^*$, implying $x \not \in  (X\setminus A)^*$. Therefore, there exists $U\in \tau_X(x)$ such that $U \cap (X\setminus A)\in \mathcal{I}_X$, i.e.\ $U \setminus A\in \mathcal{I}_X$. This, by the property of the  ideals, gives $f[U\setminus A] \in \mathcal{I}_Y$, and since $f[U]\setminus f[A]\subseteq f[U\setminus A]$, we have
\begin{equation}\label{eqo1}
f[U]\setminus f[A] \in \mathcal{I}_Y.
\end{equation}

Since $f(x)  \not \in \Psi(f[A])=Y \setminus (Y\setminus f[A])^*$, we have $f(x) \in (Y\setminus f[A])^*$, and for each $V \in \tau_Y(f(x))$ we have $V \setminus f[A] \not \in \mathcal{I}_Y$. But, since $f$ is an open mapping, $f[U]$ is open, which gives $f[U] \setminus f[A] \not \in \mathcal{I}_Y$, contradicting \eqref{eqo1}.

Now, let us show that a) implies b). Suppose that, for all $A \subseteq X$ we have  $ f[\Psi(A)]\subseteq \Psi(f[A])$. Therefore $ f[\Psi(f^{-1}[B])]\subseteq \Psi(f[f^{-1}[B]])\subseteq \Psi(B)$.  Applying the preimage on both sets, and from the fact that $f^{-1}[f[C]]\supseteq C$, we get
$\Psi(f^{-1}[B])\subseteq  f^{-1}[f[\Psi(f^{-1}[B])]]\subseteq f^{-1}[\Psi(B)]$.

If b) holds, then $f^{-1}[\Psi(f[A])]\supseteq \Psi(f^{-1}[f[A]]) \supseteq \Psi(A)$. Applying $f$, and due the fact that $f[f^{-1}[D]]\subset D$ we get
$ f[\Psi(A)]\subseteq f[f^{-1}[\Psi(f[A])]]\subseteq \Psi(f[A])$, which completes the proof.
\end{proof}

\begin{thm} Let $\langle X, \tau_X, \mathcal{I}_X\rangle$ and $\langle Y, \tau_Y, \mathcal{I}_Y\rangle$ be ideal topological spaces. If $f:\langle X, \tau_X\rangle \to \langle Y, \tau_Y\rangle$ is an open function and for all $I\in \mathcal{I}_X$ we have $f[I]\in \mathcal{I}_Y$, then  $f:\langle X, \tau_X^*\rangle \to \langle Y, \tau_Y^*\rangle$ is an open function.
\end{thm}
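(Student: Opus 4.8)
The plan is to reduce the statement to Theorem \ref{To1} by passing through the characterization of $\tau^*$-open sets in terms of the operator $\Psi$. Recall from the preliminaries that a set $A$ belongs to $\tau^*$ if and only if $A \subseteq \Psi(A)$. Hence, to prove that $f$ is open as a map $\langle X, \tau_X^*\rangle \to \langle Y, \tau_Y^*\rangle$, it suffices to take an arbitrary $U \in \tau_X^*$ and show that $f[U] \subseteq \Psi(f[U])$, for this is exactly the condition that $f[U] \in \tau_Y^*$.

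First I would record that $U \in \tau_X^*$ means precisely $U \subseteq \Psi(U)$. Applying the direct image of $f$, which is monotone with respect to inclusion, gives $f[U] \subseteq f[\Psi(U)]$. This is the only place where we use the openness of $U$ in $\tau_X^*$.

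Next, observe that the hypotheses of the present theorem — that $f$ is open and that $f[I] \in \mathcal{I}_Y$ for every $I \in \mathcal{I}_X$ — are exactly the hypotheses of Theorem \ref{To1}. Its conclusion a), instantiated at $A = U$, yields $f[\Psi(U)] \subseteq \Psi(f[U])$. Chaining the two inclusions gives
$$f[U] \subseteq f[\Psi(U)] \subseteq \Psi(f[U]),$$
so $f[U] \subseteq \Psi(f[U])$, i.e.\ $f[U] \in \tau_Y^*$. Since $U$ was an arbitrary $\tau_X^*$-open set, $f$ is open as a map between the idealized spaces.

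Here I expect no real obstacle: the substantive content has already been isolated in Theorem \ref{To1}, and the only extra ingredient is the $\Psi$-characterization of $\tau^*$-openness, which converts that theorem's set containment directly into the desired conclusion. An alternative would be to argue on a base of $\tau_X^*$ of the form $\{V \setminus I : V \in \tau_X,\ I \in \mathcal{I}_X\}$ and to analyze $f[V \setminus I]$ directly; but since $f$ need not be injective, $f[V \setminus I]$ does not reduce cleanly to $f[V]\setminus f[I]$, making that route noticeably messier. I would therefore prefer the $\Psi$-based argument above.
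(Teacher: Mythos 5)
Your proof is correct and is essentially identical to the paper's own: both use the characterization $U \in \tau^*$ iff $U \subseteq \Psi(U)$ and then chain $f[U] \subseteq f[\Psi(U)] \subseteq \Psi(f[U])$ via condition a) of Theorem \ref{To1}. Nothing to add.
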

\begin{proof} Set $A$ is open in $\tau^*_X$ iff $A \subseteq \Psi(A)$. So, for such $A$ we have
$$f[A] \subseteq f[\Psi(A)]\subseteq \Psi(f[A]),$$
implying $f[A]$ is  in $\tau^*_Y$.
\end{proof}

\begin{ex} Property that $f:\langle X, \tau_X^*\rangle \to \langle Y, \tau_Y^*\rangle$ is an open function is not equivalent with condition a) (and b)) from Theorem \ref{To1}.
Let $f$ be an arbitrary open surjection from
$\langle X, \tau_X\rangle $ to $\langle Y, \tau_Y\rangle$, where both spaces are $T_1$. Let $x_0 \in X$ and let   $f^{-1}[\{f(x_0)\}]=\{x_0\}$. We take $\mathcal{I}_X=Fin(X)$ and $\mathcal{I}_Y= \{B \subset X: |B|<\aleph_0, f(x_0) \not \in B\}$. Then, due to $T_1$, ideals do not change topologies on $X$ and on $Y$, so the mapping remains open.

Let $A=X \setminus \{x_0\}$. Then $\Psi(A)=X \setminus (X\setminus (X\setminus \{x_0\}))^*=X \setminus (\{x_0\})^*=X$. So $f[\Psi(A)]=f[X]=Y$. On the other hand $\Psi(f[A])=\Psi(f[X]\setminus \{f(x_0)\})=Y \setminus (Y\setminus (Y\setminus \{f(x_0)\}))^*
=Y \setminus \{f(x_0)\}^*\subset  Y \setminus \{f(x_0)\}$.
\end{ex}

\begin{thm} \label{openbij} Let $\langle X, \tau_X, \mathcal{I}_X\rangle$ and $\langle Y, \tau_Y, \mathcal{I}_Y\rangle$ be ideal topological spaces. If $f:\langle X, \tau_X\rangle \to \langle Y, \tau_Y\rangle$ is an open bijection and for all $I\in \mathcal{I}_X$ we have $f[I]\in \mathcal{I}_Y$, then there hold the \textit{following} equivalent conditions:

a) $\forall A \subseteq X~ (f[A])^* \subseteq  f[A^*];$

b) $\forall B \subseteq Y~ f^{-1}[B^*]\subseteq (f^{-1}[B])^*.$

\end{thm}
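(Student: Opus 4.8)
The plan is to prove condition a) directly by a contradiction argument mirroring the proof of Theorem \ref{Tc1}, but now exploiting openness and injectivity in place of continuity and the preimage-ideal hypothesis, and then to derive the equivalence of a) and b) from the bijectivity of $f$ exactly as in the earlier theorems. This result is in a sense dual to Theorem \ref{Tc1}: there continuity together with $f^{-1}[\mathcal{I}_Y]\subseteq\mathcal{I}_X$ yielded $f[A^*]\subseteq(f[A])^*$, whereas here openness together with $f[\mathcal{I}_X]\subseteq\mathcal{I}_Y$ should yield the reverse inclusion $(f[A])^*\subseteq f[A^*]$.

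First I would fix $A \subseteq X$ and take an arbitrary $y \in (f[A])^*$. Since $f$ is a bijection, there is a unique $x \in X$ with $f(x)=y$, and the goal is to show $x \in A^*$. Arguing by contradiction, suppose $x \notin A^*$; then there exists $U \in \tau_X(x)$ with $U \cap A \in \mathcal{I}_X$. The hypothesis $f[\mathcal{I}_X]\subseteq\mathcal{I}_Y$ gives $f[U \cap A]\in\mathcal{I}_Y$. Here the crucial use of injectivity enters: for a bijection one has the \emph{equality} $f[U\cap A]=f[U]\cap f[A]$ (not merely the inclusion used in Theorem \ref{Tc1}), so $f[U]\cap f[A]\in\mathcal{I}_Y$. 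Since $f$ is open and $x\in U$, the set $f[U]$ is an open neighbourhood of $y=f(x)$, which contradicts $y\in(f[A])^*$. This proves a).

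For the equivalence I would run the now-familiar bijection bookkeeping. To obtain b) from a), apply a) to $A=f^{-1}[B]$ and use surjectivity in $f[f^{-1}[B]]=B$ to get $B^*\subseteq f[(f^{-1}[B])^*]$; then take preimages and use injectivity in $f^{-1}[f[C]]=C$ to conclude $f^{-1}[B^*]\subseteq(f^{-1}[B])^*$. Conversely, to obtain a) from b), apply b) to $B=f[A]$ and use injectivity in $f^{-1}[f[A]]=A$ to get $f^{-1}[(f[A])^*]\subseteq A^*$; then take images and use surjectivity in $f[f^{-1}[D]]=D$ to conclude $(f[A])^*\subseteq f[A^*]$.

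The main obstacle I anticipate is not the logical skeleton, which is routine, but ensuring that injectivity is invoked at exactly the right moment. The step $f[U\cap A]=f[U]\cap f[A]$ requires the full equality, and it is precisely this equality, together with openness making $f[U]$ an admissible test neighbourhood of $y$, that replaces the role played by continuity and the one-sided inclusion in Theorem \ref{Tc1}. Everything else reduces to the standard image/preimage manipulations already employed for the equivalences in Theorems \ref{Tc1} and \ref{To1}.
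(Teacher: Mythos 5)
Your proposal is correct, and the equivalence of a) and b) is handled exactly as in the paper (the standard image/preimage bookkeeping via $f\circ f^{-1}=\id$ and $f^{-1}\circ f=\id$). Where you genuinely diverge is in the proof of a): the paper does not argue pointwise, but instead reduces a) to condition a) of Theorem \ref{To1} by a purely set-theoretic computation --- writing $A=X\setminus(X\setminus A)$ and using bijectivity to show that a hypothetical nonempty difference $(f[A])^*\setminus f[A^*]$ equals $f[\Psi(X\setminus A)]\setminus \Psi(f[X\setminus A])$, which contradicts Theorem \ref{To1}. Your route inlines a direct local-function argument instead: given $y\in(f[A])^*$ with (by surjectivity) unique preimage $x$, if $x\notin A^*$ you pick $U\in\tau_X(x)$ with $U\cap A\in\mathcal{I}_X$, push it forward into $\mathcal{I}_Y$, use injectivity for the equality $f[U\cap A]=f[U]\cap f[A]$, and use openness to make $f[U]$ an admissible test neighbourhood of $y$, a contradiction; this is in effect the proof of Theorem \ref{To1} transplanted from the $\Psi$-formulation to the $*$-formulation, which injectivity makes possible. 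The paper's reduction buys economy and makes the $*$/$\Psi$ duality under bijections explicit; your direct proof buys a self-contained argument that exposes precisely where each hypothesis enters --- injectivity exactly for the inclusion $f[U]\cap f[A]\subseteq f[U\cap A]$, and surjectivity exactly to guarantee that every point of $(f[A])^*$ has a preimage --- which dovetails nicely with the paper's two counterexamples (the non-surjective and non-injective cases) following the theorem.
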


\begin{proof} Firstly we will prove the equivalence of a) and b).

 a) $\Rightarrow$ b) Suppose $B \subseteq Y$. Then, using that $f\circ f^{-1}=\id$, we have $f^{-1}[B^*]=f^{-1}[(f[f^{-1}[B]])^*]\subseteq f^{-1}[f[(f^{-1}[B])^*]]=(f^{-1}[B])^*$.

b) $\Rightarrow$ a) Suppose $A \subseteq X$. Then, again using that $f\circ f^{-1}=\id$, we have
$(f[A])^*=f[f^{-1}[(f[A])^*]]\subseteq f[(f^{-1}[f[A]])^*]= f[A^*]$.

Now, let us prove that a) holds. Suppose that there exists $A \subseteq X$ such that $(f[A])^*\setminus f[A^*] \neq \emptyset$. So,
\begin{eqnarray} \label{eq2} \lefteqn{(f[A])^*\setminus f[A^*]}\\
\nonumber&=& {(f[X \setminus (X \setminus A)])^*\setminus f[(X \setminus (X \setminus A))^*]} \\
\nonumber&=& {(f[X] \setminus f[(X \setminus A)])^*\setminus f[(X \setminus (X \setminus A))^*]} \\
\nonumber&=& {(Y \setminus f[(X \setminus A)])^*\setminus f[(X \setminus (X \setminus A))^*]} \\
\nonumber&=& {(Y\setminus f[(X \setminus (X \setminus A))^*]) \setminus (Y \setminus(Y \setminus f[(X \setminus A)])^*) } \\
\nonumber&=& { f[\Psi(X \setminus A)] \setminus \Psi(f[X \setminus A]) }  \neq \emptyset,
\end{eqnarray}
since $f[\Psi(X \setminus A)]=(Y\setminus f[(X \setminus (X \setminus A))^*])$, which contradicts  a) from Theorem \ref{To1}.

\end{proof}

\begin{ex}
If $f$ is not a surjection in Theorem \ref{openbij}, and other conditions hold, then condition a) (and at the same time b)) does not have to hold.

Let $f:\langle X, \tau_X\rangle \to \langle Y \tau_Y \rangle$ be a homeomorphism. Let $\mathcal{I}_X$ and $\mathcal{I}_Y$ be ideals preserved by homeomorphism, i.e.\ $I \in \mathcal{I}_X$ iff $f[I] \in \mathcal{I}_Y$.  Let $Z=Y \cup \{z\}$, where $z$ is a point outside $Y$, $\tau_Z=\langle \tau_Y\cup\{Z\}\rangle$,%
and $\mathcal{I}_Z=\mathcal{I}_Y$. So, $\{z \} \not \in \mathcal{I}_Z$ and $\tau_Z(z)=\{Z\}$.
 Finally, let $\tilde f:\langle X, \tau_X\rangle \to \langle Z ,\tau_Z \rangle$ be defined by $\tilde{f}(x)=f(x)$. It is easy to see, since $\tilde f[A]=f[A]$ for each $A\subset X$, we have  that $\tilde f$ is open and   for all $I\in \mathcal{I}_X$ we have $f[I]\in \mathcal{I}_Z$.

Then, for each set $A \in P(Z)\setminus \mathcal{I}_Z$ we have $z \in A^*$. So, if we take any ideal on $X$ different from $P(X)$, we have $z\in (f[X])^*$, but $z\not \in f[X^*]$.
\end{ex}

\begin{ex}\label{opennotinj}

If $f$ is not an injection in Theorem \ref{openbij}, and other condition hold, then condition a) (and at the same time b)) does not have to  hold.

Let $f:\langle X, \tau_X\rangle \to \langle Y ,\tau_Y \rangle$ be a homeomorphism. Let $\mathcal{I}_X$ and $\mathcal{I}_Y$ be ideals preserved by homeomorphism, i.e.\ $I \in \mathcal{I}_X$ iff $f[I] \in \mathcal{I}_Y$.
Let $y_0 \in Y$ be such that $\{y_0\}\in \mathcal{I}_Y$ and let $x_0$ be such that $f(x_0)=y_0$. Such $y_0$ always exists in case $\mathcal{I}_Y \neq \{\emptyset\}$.  Let $Z=X \cup \{z\}$, where $z$ is a point outside of $X$ and $\tau_Z=\{O : O \not \in \tau_X(x_0)\} \cup \{Z\}$.
Then let $\mathcal{I}_Z=\{I\setminus \{x_0\}: I \in \mathcal{I}_X\}$   and
$\tilde{\mathcal{I}_Y}=\{I \setminus  \{y_0\}: I \in \mathcal{I}_Y\}$.

Finally, let $\tilde f:\langle Z, \tau_Z,  \mathcal{I}_Z\rangle \to \langle Y, \tau_Y,  \tilde{\mathcal{I}_Y} \rangle$ is defined by $\tilde{f}(x)=f(x)$, for $x \in X$, and $f(z)=y_0$.

Let us prove that $\tilde f$ fulfils all conditions from Theorem \ref{openbij} except injectivity.
It is easy to see that $\tilde f[A]=f[A]$ when $z \not  \in A$, and $\tilde f[Z]=f[X]=Y$. So, $\tilde f$ remains open. For each $I \in \tilde{\mathcal{I}_X}$ we have $\tilde f[I]=f[I_1\setminus\{x_0\}]=f^{-1}[I_1]\setminus \{y_0\} \in  \mathcal{I}_Y$, where $I_1$ is some set from ideal $\mathcal{I}_X$.

For $A=X$ we have $\tilde f[A]=Y$, and, since $\{y_0\} \not \in \tilde{ \mathcal{I}_Y}$, $y_0 \in (\tilde f[A])^*$. On the other hand, since the only neighbourhood of $z$ an $x_0$ is whole $Z$, if we take $\mathcal{I}_X$ such that $X \setminus \{x_0\}$ is in the starting ideal, then $A^*=X\setminus \{x_0\}$, so $y_0 \not \in f[A^*]$.
\end{ex}

If we consider closed mappings, since each open bijection is, at the same time a closed bijection,  we can freely change the word "open" with "closed" in Theorem \ref{openbij}. However, in such case, we can improve our result.

\begin{thm} \label{closedsur} Let $\langle X, \tau_X, \mathcal{I}_X\rangle$ and $\langle Y, \tau_Y, \mathcal{I}_Y\rangle$ be ideal topological spaces. Let $f:\langle X, \tau_X\rangle \to \langle Y, \tau_Y\rangle$ be a closed injection and for all $I\in \mathcal{I}_X$ let $f[I]\in \mathcal{I}_Y$. Then there hold the \textit{following} equivalent conditions:

a) $\forall A \subseteq X~ (f[A])^* \subseteq  f[A^*];$

b) $\forall B \subseteq Y~ f^{-1}[B^*]\subseteq (f^{-1}[B])^*.$

\end{thm}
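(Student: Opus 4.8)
The plan is to follow the pattern of Theorem~\ref{openbij}, but to replace every appeal to surjectivity by a consequence of closedness. First I would record the key observation that, since $f$ is closed and $X$ is closed in $\langle X,\tau_X\rangle$, the image $f[X]$ is closed in $\langle Y,\tau_Y\rangle$. Hence, by property~(2) of the local function, for every $A\subseteq X$ we have $(f[A])^*\subseteq \Cl(f[A])\subseteq \Cl(f[X])=f[X]$. This single inclusion is what compensates for the missing surjectivity: whenever $y\in (f[A])^*$ it guarantees a point $x\in X$, unique by injectivity, with $f(x)=y$.

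With this in hand I would prove a) directly, by the closed-map analogue of the neighbourhood computation used in Theorem~\ref{To1}. Fix $A\subseteq X$ and $y\in(f[A])^*$, and write $y=f(x)$ as above. Suppose, towards a contradiction, that $x\notin A^*$; then there is $U\in\tau_X(x)$ with $U\cap A\in\mathcal{I}_X$, whence $f[U\cap A]\in\mathcal{I}_Y$ by the image hypothesis. Now closedness enters: $X\setminus U$ is closed, so $f[X\setminus U]$ is closed and $V:=Y\setminus f[X\setminus U]$ is open; since $x\in U$, injectivity gives $f(x)\notin f[X\setminus U]$, so $V\in\tau_Y(y)$. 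Injectivity also yields $V\cap f[A]\subseteq f[U\cap A]$ (any $w=f(a)\in V\cap f[A]$ forces $a\in U$, hence $a\in U\cap A$), so $V\cap f[A]\in\mathcal{I}_Y$ by heredity~(I1), contradicting $y\in(f[A])^*$. Thus $x\in A^*$ and $y\in f[A^*]$, proving a).

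It remains to treat the equivalence of a) and b), and this is where I expect the real difficulty to lie. For b)$\Rightarrow$a) the argument is clean: applying b) to $B=f[A]$ gives $f^{-1}[(f[A])^*]\subseteq(f^{-1}[f[A]])^*=A^*$ (using $f^{-1}\circ f=\id$), and since $(f[A])^*\subseteq f[X]$ we have $(f[A])^*=f[f^{-1}[(f[A])^*]]$, so applying $f$ recovers a). The obstacle is the converse a)$\Rightarrow$b). The proof of Theorem~\ref{openbij} used $f[f^{-1}[B]]=B$, i.e.\ surjectivity; for a mere injection one only has $f[f^{-1}[B]]=B\cap f[X]$, so applying a) to $A=f^{-1}[B]$ and taking preimages controls only $(B\cap f[X])^*$, giving $f^{-1}[(B\cap f[X])^*]\subseteq(f^{-1}[B])^*$. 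Since $B^*=(B\cap f[X])^*\cup(B\setminus f[X])^*$ by property~(4), the whole matter reduces to the single inclusion $f^{-1}[(B\setminus f[X])^*]\subseteq(f^{-1}[B])^*$, i.e.\ to showing that no point of $f[X]$ lies in $(B\setminus f[X])^*$ unless it already lies in $(B\cap f[X])^*$. This is precisely the step I would scrutinise most carefully, leveraging that $Y\setminus f[X]$ is open (again from closedness of $f[X]$); it is the only place where the passage from bijections to injections is not automatic, and I would expect the argument either to succeed through closedness alone or to require an additional condition controlling the trace of $B$ outside $f[X]$.
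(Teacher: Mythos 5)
Your proof of part a) is correct, and it takes a genuinely different (more self-contained) route than the paper. The paper does not argue pointwise: it observes that $f[X]$ is closed, restricts $f$ to the closed bijection $g:X\to f[X]$, applies Theorem~\ref{openbij} (in its ``closed'' form, via the remark preceding the theorem) to $g$, and then transfers the conclusion back to $Y$ by noting that $(g[A])^*$ computed in the subspace $f[X]$ coincides with $(f[A])^*$ computed in $Y$, because every $y\in Y\setminus f[X]$ has the open neighbourhood $Y\setminus f[X]$ disjoint from $f[A]$. Your argument inlines the same two uses of closedness directly: the inclusion $(f[A])^*\subseteq \Cl(f[A])\subseteq f[X]$ supplies the (unique) preimage point, and your set $V=Y\setminus f[X\setminus U]$ plays the role that openness of $g[U]$ in the subspace plays in the paper (indeed $V\cap f[X]=f[U]$ for an injection). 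Both proofs of a) are sound; yours has the advantage of not depending on Theorem~\ref{openbij} or on subspace considerations.

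Concerning the equivalence of a) and b): the obstacle you isolate in a) $\Rightarrow$ b) is not a defect of your attempt but of the theorem itself --- under the stated hypotheses b) is false, so no argument can close that gap, and your suspicion that an additional condition on the trace of $B$ outside $f[X]$ is needed is exactly right. Counterexample: let $X=[0,1]$ with the subspace topology, $Y=\mathbb{R}$ with the usual topology, $f$ the inclusion (a closed injection), and $\mathcal{I}_X=\{\emptyset\}$, $\mathcal{I}_Y=\{\emptyset\}$, so that the local function is the ordinary closure and the ideal hypothesis holds trivially. For $B=(1,2)$ we get $f^{-1}[B^*]=f^{-1}\bigl[[1,2]\bigr]=\{1\}$, while $(f^{-1}[B])^*=\emptyset^*=\emptyset$; thus b) fails although a) holds. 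The failure is precisely at the inclusion you singled out: $(B\setminus f[X])^*$ can meet $f[X]$, since openness of $Y\setminus f[X]$ does not prevent the local function of a subset of $Y\setminus f[X]$ from reaching boundary points lying in $f[X]$. Note that the paper's own proof establishes only a) and never addresses b) or the claimed equivalence, so Theorem~\ref{closedsur} as printed is too strong; what actually survives is a) together with your (correct) implication b) $\Rightarrow$ a).
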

\begin{proof}
Since $X$ is a closed set and $f$ a closed mapping,  $f[X]$  is also closed,  which implies that $Y \setminus  f[X]$ is open in $Y$. So, if we take the surjective restriction $g:X \to f[X]$, such that $g(x)=f(x)$, we obtain a closed bijection. By Theorem \ref{openbij} and the remark before this theorem, we have  $\forall A \subseteq X~ (g[A])^* \subseteq  g[A^*]$. Obviously, $g[A^*]=f[A^*]$. But also, $(g[A])^*$ in space $f[X]$ is equal to $(f[A])^*$ in $Y$, since, for each $y \in Y \setminus f[X]$, the set $Y \setminus f[X]$ is an open neighbourhood of $y$ which does not intersect  $f[A]$. Therefore $(f[A])^*=(g[A])^* \subseteq  g[A^*]=f[A^*]$.
\end{proof}

\begin{ex}
In Example \ref{opennotinj} the given mapping is, in the same time, a closed surjection, so it witnesses that Theorem \ref{closedsur} can not be further weakened.
\end{ex}

It remains an open question if we can state some nice property for closed noninjective mappings. Example \ref{opennotinj} shows that even  closed "finite to one" mappings does not imply  results from Theorem \ref{closedsur}.

\vskip 1cm

Finally, gathering all previous, we extended the result obtained by Janković and Hamlett \cite{HRStar}, which was already mentioned in Theorem \ref{homeo}.

\begin{cor}
Let $\langle X, \tau_X, \mathcal{I}_X\rangle$ and $\langle Y, \tau_Y, \mathcal{I}_Y\rangle$ be ideal topological spaces. If $f:\langle X, \tau_X\rangle \to \langle Y, \tau_Y\rangle$ is homeomorphism and for each $I \subset X$ there holds  $I\in \mathcal{I}_X$ iff $f[I]\in \mathcal{I}_Y$. Then the following equivalent conditions hold:

a) $f:\langle X, \tau_X^*\rangle \to \langle Y, \tau_Y^*\rangle$ is a homeomorphism;

b) $\forall A \subseteq X~ (f[A])^* = f[A^*];$

c) $\forall B \subseteq Y~ f^{-1}[B^*]= (f^{-1}[B])^*.$

d) $\forall A \subseteq X~\Psi(f[A])= f[\Psi(A)];$

e) $\forall B \subseteq Y~f^{-1}[\Psi(B)]= \Psi(f^{-1}[B]).$
\end{cor}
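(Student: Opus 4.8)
The plan is to prove the five conditions are equivalent by combining the results already established for the three special cases of continuity, openness, and closedness of a homeomorphism, and then closing the loop through Theorem \ref{homeo}. Since $f$ is a homeomorphism with $I \in \mathcal{I}_X \Leftrightarrow f[I] \in \mathcal{I}_Y$, we have simultaneously that $f$ is a continuous bijection satisfying $f^{-1}[\mathcal{I}_Y] \subseteq \mathcal{I}_X$ (because $f[I]\in\mathcal{I}_Y$ forces $I\in\mathcal{I}_X$, i.e.\ preimages of ideal sets are in the ideal), and that $f$ is an open bijection satisfying $f[\mathcal{I}_X]\subseteq \mathcal{I}_Y$. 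These are exactly the hypotheses of Theorem \ref{Tc1} and Theorem \ref{openbij} respectively.

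First I would invoke Theorem \ref{Tc1} (applicable since $f$ is continuous and preimages of $\mathcal{I}_Y$-sets lie in $\mathcal{I}_X$) to obtain the inclusion $f[A^*]\subseteq (f[A])^*$ for all $A\subseteq X$. Then I would invoke Theorem \ref{openbij} (applicable since $f$ is an open bijection with $f[\mathcal{I}_X]\subseteq \mathcal{I}_Y$) to obtain the reverse inclusion $(f[A])^*\subseteq f[A^*]$ for all $A$. Combining the two inclusions yields $f[A^*]=(f[A])^*$, which is precisely condition b). The equivalence of b) with c) is just the standard bijective passage between the statement for $A\subseteq X$ and its translate for $B\subseteq Y$, obtained by substituting $B=f[A]$ and using $f\circ f^{-1}=\id$ (exactly the argument already carried out in the proof of Theorem \ref{openbij}).

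For the equivalence of b) and c) with a), I would simply cite Theorem \ref{homeo}: under the standing hypotheses $f$ is a bijection with $f[\mathcal{I}_X]=\mathcal{I}_Y$ (the biconditional $I\in\mathcal{I}_X \Leftrightarrow f[I]\in\mathcal{I}_Y$ gives equality, not just inclusion), so that theorem directly asserts the equivalence of a), b), and the $\Psi$-condition c) of that theorem, which is our d). This also furnishes d), and e) then follows from d) by the same bijective substitution $B=f[A]$ used throughout, or alternatively by appealing to Theorem \ref{contpsi} together with Theorem \ref{To1} to get both inclusions $\Psi(f[A])\subseteq f[\Psi(A)]$ and $f[\Psi(A)]\subseteq \Psi(f[A])$ and hence equality.

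The work here is genuinely light because the corollary is assembled entirely from theorems proved earlier; the only point requiring a little care is bookkeeping of which hypothesis direction each cited theorem needs. The main (minor) obstacle is verifying that the single biconditional hypothesis $I\in\mathcal{I}_X \Leftrightarrow f[I]\in\mathcal{I}_Y$ simultaneously supplies $f^{-1}[\mathcal{I}_Y]\subseteq\mathcal{I}_X$ for the continuity-based theorems and $f[\mathcal{I}_X]\subseteq\mathcal{I}_Y$ for the openness-based theorems, together with the equality $f[\mathcal{I}_X]=\mathcal{I}_Y$ that Theorem \ref{homeo} demands; once this is noted, each cited result applies verbatim and the five conditions collapse to one another. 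I would present the argument as two inclusion steps giving b), then a single appeal to Theorem \ref{homeo} linking a), b), d), with c) and e) as the $Y$-side translations.
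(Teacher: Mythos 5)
Your proof is correct and follows essentially the same route the paper intends: the corollary appears without a separate proof precisely because it is meant to be assembled from the preceding results, and your assembly --- Theorem \ref{Tc1} plus Theorem \ref{openbij} giving the two inclusions for b), Theorem \ref{homeo} (applicable since the biconditional ideal hypothesis together with bijectivity yields $f[\mathcal{I}_X]=\mathcal{I}_Y$) linking a), b) and d), and the standard bijective translations giving c) and e) --- is exactly that. Your bookkeeping of which direction of the ideal hypothesis each cited theorem requires (preimages for the continuity-based results, images for the openness-based ones) is also accurate.
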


\end{document}